\numberwithin{equation}{section}
\theoremstyle{plain}
\newtheorem{theorem}{Theorem}[section]
\newtheorem{lemma}[theorem]{Lemma}
\newtheorem{definition}[theorem]{Definition}
\newtheorem{remark}[theorem]{Remark}
\newtheorem{example}[theorem]{Example}
\begin{document}

\title[Some rigidity results for static three-manifolds with boundary]{Some rigidity results for static three-manifolds with boundary and positive scalar curvature}

\author{Vladimir Medvedev}

\address{Faculty of Mathematics, National Research University Higher School of Economics, 6 Usacheva Street, Moscow, 119048, Russian Federation}

\email{vomedvedev@hse.ru}

\begin{abstract}
This paper studies three-dimensional compact static manifolds with boundary and positive scalar curvature. We prove that, under a suitable bound on the Ricci curvature, the orientable  quotient of the Nariai static manifold with boundary $Nar_{-1,1}(\mathbb S^2)$ is the only such manifold with connected boundary, provided that the zero-level set of the potential is connected and does not intersect the boundary. We also establish a rigidity theorem for the upper hemisphere with the standard static potential, in the spirit of Cruz and Nunes. 
 \end{abstract}

\maketitle


\newcommand\cont{\operatorname{cont}}
\newcommand\diff{\operatorname{diff}}

\newcommand{\dvol}{\text{dA}}
\newcommand{\Ric}{\operatorname{Ric}}
\newcommand{\Hess}{\operatorname{Hess}}
\newcommand{\GL}{\operatorname{GL}}
\newcommand{\myO}{\operatorname{O}}
\newcommand{\myP}{\operatorname{P}}
\newcommand{\eye}{\operatorname{Id}}
\newcommand{\myF}{\operatorname{F}}
\newcommand{\Vol}{\operatorname{Vol}}
\newcommand{\odd}{\operatorname{odd}}
\newcommand{\even}{\operatorname{even}}
\newcommand{\ol}{\overline}
\newcommand{\mye}{\operatorname{E}}
\newcommand{\myo}{\operatorname{o}}
\newcommand{\myt}{\operatorname{t}}
\newcommand{\irr}{\operatorname{Irr}}
\newcommand{\mydiv}{\operatorname{div}}
\newcommand{\curl}{\operatorname{curl}}
\newcommand{\re}{\operatorname{Re}}
\newcommand{\im}{\operatorname{Im}}
\newcommand{\can}{\operatorname{can}}
\newcommand{\scal}{\operatorname{scal}}
\newcommand{\tr}{\operatorname{trace}}
\newcommand{\sgn}{\operatorname{sgn}}
\newcommand{\SL}{\operatorname{SL}}
\newcommand{\myspan}{\operatorname{span}}
\newcommand{\mydet}{\operatorname{det}}
\newcommand{\SO}{\operatorname{SO}}
\newcommand{\SU}{\operatorname{SU}}
\newcommand{\specl}{\operatorname{spec_{\mathcal{L}}}}
\newcommand{\fix}{\operatorname{Fix}}
\newcommand{\id}{\operatorname{id}}
\newcommand{\grad}{\operatorname{grad}}
\newcommand{\singsup}{\operatorname{singsupp}}
\newcommand{\wave}{\operatorname{wave}}
\newcommand{\ind}{\operatorname{ind}}
\newcommand{\mynull}{\operatorname{null}}
\newcommand{\inj}{\operatorname{inj}}
\newcommand{\arcsinh}{\operatorname{arcsinh}}
\newcommand{\Spec}{\operatorname{Spec}}
\newcommand{\Ind}{\operatorname{Ind}}
\newcommand{\Nul}{\operatorname{Nul}}
\newcommand{\inrad}{\operatorname{inrad}}
\newcommand{\mult}{\operatorname{mult}}
\newcommand{\Length}{\operatorname{Length}}
\newcommand{\Area}{\operatorname{Area}}
\newcommand{\Ker}{\operatorname{Ker}}
\newcommand{\floor}[1]{\left \lfloor #1  \right \rfloor}

\newcommand\restr[2]{{
  \left.\kern-\nulldelimiterspace 
  #1 
  \vphantom{\big|} 
  \right|_{#2} 
  }}


\section{Introduction}

A Riemannian manifold $(M, g)$ with boundary $\partial M$ is called a \textit{static manifold with boundary} if there exists a non-zero function $V \in C^\infty(M)$, called the \textit{(static) potential}, satisfying the following boundary value problem:
\begin{equation}\label{static}
\left\{
\begin{array}{rcl}
\Hess_{g} V - (\Delta_{g} V)\,g - V \Ric_{g} &=& 0 \quad \text{in } M, \\
\noalign{\vskip6pt}
\dfrac{\partial V}{\partial \nu}\,g - V B_{g} &=& 0 \quad \text{on } \partial M.
\end{array}
\right.
\end{equation}
Here, $\nu$ denotes the outward unit normal vector field to $\partial M$, and $B_g$ is the second fundamental form of $\partial M$ with respect to $\nu$. Our sign convention for $B_g$ is such that the unit sphere in Euclidean space has positive mean curvature with respect to the outward unit normal vector field. The triple $(M, g, V)$ is referred to as a \textit{static manifold with boundary}.

The metric on static manifolds with boundary arises in the study of prescribed scalar curvature on $M$ and prescribed mean curvature on $\partial M$, where it is referred to as a non-generic metric (see~\cite{ho2020deformation,cruz2023critical,sheng2024localized,sheng2025static}). The term ``static manifold with boundary" was introduced in~\cite{almaraz2022rigidity}, and the geometric properties of such manifolds have since been investigated in~\cite{sheng2025static,cruz2023static,medvedev2024static}.

Taking the metric trace in system~\eqref{static} the reader can see that~\eqref{static} implies
\begin{equation*}
\left\{
   \begin{array}{rcl}
\Delta_gV&=&-\dfrac{R_g}{n-1}V\quad\mbox{in}\quad M,\\
\dfrac{\partial V}{\partial \nu}&=&\dfrac{H_g}{n-1}V\quad\mbox{on}\quad\partial M.
\end{array}
   \right.
\end{equation*}
 Moreover, as it was shown in~\cite{cruz2023static}, for static manifolds with boundary $R_g=const$ and $H_g=const$. In other words, $V$ satisfies the \textit{Robin problem}. 

In this paper, we focus on compact static manifolds with boundary and positive scalar curvature. Examples of such manifolds include spherical caps, the Schwarzschild-de Sitter static manifold with boundary, and the Nariai static manifold with boundary (see Section 3 in~\cite{medvedev2024static} for further examples). In these examples, one observes two distinct scenarios: either the zero-level set of the potential intersects the boundary, and the boundary is connected (as in the case of spherical caps), or the zero-level set is connected and does not intersect the boundary, but the boundary consists of two connected components. This leads to the following natural question:

\medskip

\textit{Does there exist a compact static manifold with connected boundary and positive scalar curvature such that the zero-level set of the potential is connected and does not intersect the boundary?}

\medskip

The answer to this question is affirmative. An explicit example is constructed as follows:

\medskip

\begin{example}\label{example}

Consider the following \textit{Nariai static manifold with boundary} (see the notation in \textit{Example 10} in~\cite{medvedev2024static}):  
\begin{gather*}
Nar_{-1,1}(\mathbb S^{2}):=\left(\left[-\dfrac{\pi}{2\sqrt{3}},\dfrac{3\pi}{2\sqrt{3}}\right]\times \mathbb S^{2}, g=dr^2+\frac{1}{3}g_{\mathbb S^{2}},V(r)=\frac{1}{\sqrt{3}}\sin \left(\sqrt{3}r\right)\right).
\end{gather*}
The zero-level set of the potential, $\Sigma = V^{-1}(0)$, consists of two connected components corresponding to $r = 0$ and $r = \dfrac{\pi}{\sqrt{3}}$, each of which is a round sphere. Consider the following map
$$
A\colon (r,x)\mapsto \left(\frac{\pi}{\sqrt{3}}-r,-x\right),
$$
where $-x$ stands for the antipodal point to $x\in \mathbb S^{2}$. It is an isometric involution of $\left(\left[-\dfrac{\pi}{2\sqrt{3}}, \dfrac{3\pi}{2\sqrt{3}}\right]\times \mathbb S^{2}, g\right)$ without fixed points and $V\circ A=V$. Then the quotient manifold $Nar_{-1,1}(\mathbb S^{2})/A$, endowed with the quotient metric, is a static manifold with boundary with the potential $V$. It is diffeomorphic to $\mathbb {RP}^3$ minus a ball. Hence, it is orientable and the boundary is connected. The zero-level set of the potential is $\mathbb S^{2}$ with the standard metric. It is connected and does not intersect the boundary. Finally, the scalar curvature of $Nar_{-1,1}(\mathbb S^{2})/A$ is equal to 6 and the area of the zero-level set of the potential equals $\dfrac{4\pi}{3}$.
 
 \begin{remark}\label{nar}
 In fact, there are exactly two manifolds with boundary, admitting a two-sheeted covering with a cylinder $[0,1]\mathbb \times \mathbb S^2$ as the total space. This follows from a straightforward analysis of the $\mathbb Z_2$-action on the total space (see also the remark in the end of Section 7 in~\cite{ambrozio2017static}). Notice, that only one of these two manifolds is orientable. Hence, $Nar_{-1,1}(\mathbb S^2)/A$ is the only orientable static manifold with boundary, admitting a two-sheeted covering with $Nar_{-1,1}(\mathbb S^{2})$ as the total space. 
 \end{remark}
 
 \end{example}

Before proceeding to our first result, we recall the following definition.

\begin{definition}[see~\cite{ambrozio2017static}]\label{def:equiv}
Two static manifolds $(M_{i},g_{i},V_{i})$, $i=1,2$, are said to be \textit{equivalent} if there exists a diffeomorphism $\varphi : M_{1} \rightarrow M_{2}$ such that $\varphi^{*}g_2 = c g_1$ for some constant $c>0$ and $V_2\circ \varphi = \lambda V_1$ for some constant $\lambda$.
\end{definition}

\begin{theorem}\label{them:Ric0}
Let $(M^3, g, V)$ be a compact static manifold with connected boundary, scalar curvature $R_g = 6$, and such that $|\mathring{\mathrm{Ric}}_g|^2 \leqslant 6$. Suppose that $V^{-1}(0) \subset \mathrm{Int}(M)$ is connected. Then $(M^3, g, V)$ is equivalent to $Nar_{-1,1}(\mathbb S^2)/A$.
\end{theorem}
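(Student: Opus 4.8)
The plan is to analyse the zero set $\Sigma=V^{-1}(0)$, extract the two consequences of the Ricci bound, and then run an Obata/Weinberger-type rigidity argument for the potential that forces $(M,g,V)$ to be the Nariai model. First, $V$ and $\nabla V$ cannot vanish at a common point: there $\Hess_g V=V(\Ric_g-3g)$ would vanish to second order, and Aronszajn unique continuation for $\Delta_g V=-3V$ would give $V\equiv 0$. Hence $\Sigma$ is a closed regular surface in $\mathrm{Int}(M)$; restricting the static equation to $\Sigma$ gives $\Hess_g V|_\Sigma=0$, so $\Sigma$ is totally geodesic and $|\nabla V|$ is a positive constant on it, normalised to $1$ by rescaling $V$. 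From $|\mathring{\mathrm{Ric}}_g|^2\leqslant 6$ one checks that every eigenvalue of $\mathring{\mathrm{Ric}}_g$ lies in $[-2,2]$, hence $0\leqslant\Ric_g\leqslant 4g$ on $M$; in particular $\Ric_g(\nu,\nu)\geqslant 0$ along $\Sigma$, and the Gauss equation gives $K_\Sigma=3-\Ric_g(\nu,\nu)\leqslant 3$. Moreover the trace-free part of the static equation reads $\Hess_g V+Vg=V\,\mathring{\mathrm{Ric}}_g$, so $|\Hess_g V|^2=V^2(|\mathring{\mathrm{Ric}}_g|^2+3)\leqslant 9V^2=(\Delta_g V)^2$, with equality at a point precisely when $|\mathring{\mathrm{Ric}}_g|^2=6$ there.

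Next I would cut $M$ along $\Sigma$. Since $\partial M$ is connected and disjoint from $\Sigma$, after replacing $V$ by $-V$ if needed, $N:=\overline{\{V>0\}}$ is a compact connected $3$-manifold with $\partial N=\Sigma$ and $\partial M\subset\mathrm{Int}(\overline{\{V<0\}})$; on $N$ the function $V>0$ solves the over-determined problem $\Delta_g V=-3V$ in $N$, $V=0$ and $\partial_\nu V=-1$ on $\partial N$. It follows (cf.~\cite{ambrozio2017static}) that $\Sigma$ is a $2$-sphere, so Gauss--Bonnet gives $4\pi=\int_\Sigma K_\Sigma\,dA=3|\Sigma|-\int_\Sigma\Ric_g(\nu,\nu)\,dA\leqslant 3|\Sigma|$, i.e. $|\Sigma|\geqslant\frac{4\pi}{3}$, with equality if and only if $\Ric_g(\nu,\nu)\equiv 0$ on $\Sigma$ — equivalently $K_\Sigma\equiv 3$, so that $(\Sigma,g|_\Sigma)$ is the round sphere with $|\Sigma|=\frac{4\pi}{3}$, exactly as for $Nar_{-1,1}(\mathbb S^{2})/A$.

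The heart of the proof, and the step I expect to be the main obstacle, is the reverse bound $|\Sigma|\leqslant\frac{4\pi}{3}$, i.e. $\Ric_g(\nu,\nu)\equiv 0$ on $\Sigma$. The over-determined data for $V$ on $N$ together with $\Ric_g\geqslant 0$ is the setting of a Weinberger/Reilly-type rigidity, but $\Ric_g\geqslant 0$ alone is insufficient: one must use the sharp pinching $|\mathring{\mathrm{Ric}}_g|^2\leqslant 6$ decisively (a naive maximum principle for $|\nabla V|^2+3V^2$ fails, since the first-order term it produces is only controlled by $\pm 2|\nabla V|^2$). I would study a $P$-function modelled on the identity $|\nabla V|^2=1-3V^2$, which holds on $Nar_{-1,1}(\mathbb S^{2})$ and already on $\Sigma$, and show with the static equation, the Bochner formula and the pinching that it satisfies an elliptic differential inequality forcing it to be constant on $N$ (the critical set of $V$ — a totally geodesic round sphere in the model, where the naive $P$-function is singular — must be handled by a limiting argument). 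Constancy then gives $|\nabla V|^2=1-3V^2$, and reading $\Hess_g V$ off the static equation shows that $\mathring{\mathrm{Ric}}_g$ realises equality in the pinching and that, in Fermi coordinates around $\Sigma$, $g=dr^2+\frac13 g_{\mathbb S^{2}}$ with $V=\frac{1}{\sqrt3}\sin(\sqrt3 r)$; in particular $\Ric_g(\nu,\nu)=0$ on $\Sigma$ and $(\Sigma,g|_\Sigma)$ is the round sphere of curvature $3$.

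Finally, static metrics being real-analytic (or by an open--closed argument: the set of points admitting a neighbourhood isometric to one in $Nar_{-1,1}(\mathbb S^{2})$ is non-empty, open and closed in $\mathrm{Int}(M)$), $(M,g,V)$ is, up to scaling of $V$, a quotient of $Nar_{-1,1}(\mathbb S^{2})$ by a fixed-point-free isometric involution preserving the potential. Since $\partial M$ is connected, $\Sigma$ is connected and $\Sigma\cap\partial M=\varnothing$, Remark~\ref{nar} identifies this quotient with $Nar_{-1,1}(\mathbb S^{2})/A$, so $(M^3,g,V)$ is equivalent to $Nar_{-1,1}(\mathbb S^{2})/A$.
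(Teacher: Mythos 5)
There is a genuine gap, and it sits exactly where you flag ``the main obstacle.'' The step you leave as a proposal --- that a $P$-function modelled on $|\nabla V|^2+3V^2$ must be constant on $N=\overline{\{V>0\}}$, forcing $\Ric_g(\nu,\nu)\equiv 0$ on $\Sigma$ and $|\Sigma|\leqslant\frac{4\pi}{3}$ --- is not merely unproven; as formulated it is false, because you have localized the argument to the component of $M\setminus\Sigma$ that does not contain $\partial M$. The standard upper hemisphere $(\mathbb S^3_+,g_{\mathbb S^3},V=x_4)$ satisfies every hypothesis you impose on $N$: it is a compact static triple with $R_g=6$, $|\mathring{\Ric}_g|^2=0\leqslant 6$, $0\leqslant\Ric_g\leqslant 4g$, $V>0$ in the interior, $V=0$ and $\partial_\nu V$ constant on the boundary sphere $\Sigma$ --- yet there $|\Sigma|=4\pi$, $\Ric_g(\nu,\nu)=2$ on $\Sigma$, and $|\nabla V|^2+3V^2$ is not constant. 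So no elliptic inequality derived from data on $N$ alone can force the cylindrical identity $|\nabla V|^2=1-3V^2$. The information that excludes the hemisphere must come from the other side of $\Sigma$, i.e.\ from the piece of $M$ containing the connected boundary $\partial M$. This is precisely how the paper proceeds: it applies Ambrozio's Theorem~A to $\Omega$ (your $N$) to get the dichotomy ``hemisphere or cylinder quotient,'' and then kills the hemisphere case globally, using the identity $H_g\int_{\partial M}V\,ds_g=R_g\int_M V\,dv_g$ together with the integral formula of \cite[Corollary~4.4]{medvedev2024static}, the Frankel argument for the Einstein case $H_g=0$, and the locally-area-minimizing analysis of Lemma~\ref{lemma:mixed}/Lemma~\ref{thm:impos} to pin down the sign of $H_g$. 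None of these global inputs appears in your outline, and without them the case distinction cannot be resolved.

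A secondary issue: even granting the cylindrical structure on $N$, your concluding ``open--closed/analyticity'' propagation to all of $M$ glosses over the identification of the piece containing $\partial M$ (in particular why $H_g=0$ and why the deck transformation is the specific involution $A$). The paper handles this by proving $|\mathring{\Ric}_g|^2\equiv 6$ on all of $M$, deducing $C=0$ and local conformal flatness, invoking Kobayashi's classification, and then constructing the explicit two-sheeted cover $\widetilde M\to M$ equivalent to $Nar_{-1,1}(\mathbb S^2)$ before appealing to Remark~\ref{nar}. Your preliminary observations (regularity of $\Sigma$, total geodesy, the eigenvalue bound $0\leqslant\Ric_g\leqslant 4g$, and the Gauss--Bonnet lower bound $|\Sigma|\geqslant\frac{4\pi}{3}$) are all correct, but they do not substitute for the missing global argument.
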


\begin{remark}\label{rem:ifnot}
\begin{enumerate}[(i)]
\item Without assuming that $V^{-1}(0)$ is connected, one can find examples of static manifolds with connected boundary, scalar curvature $R_g = 6$, and $|\mathring{\mathrm{Ric}}_g|^2 \leqslant 6$ that are distinct from $Nar_{-1,1}(\mathbb S^2)/A$. Specifically, consider $Nar_{-k,k}(\mathbb S^{2})$ which is defined as
$$
\left(\frac{1}{\sqrt{3}}\left[\dfrac{\pi}{2}-k\pi, \dfrac{\pi}{2}+k\pi\right]\times \mathbb S^{2}, g=dr^2+\frac13g_{\mathbb S^{2}},V(r)=\frac{1}{\sqrt{3}}\sin \left(\sqrt{3}r\right)\right),
$$ 
for $k \in \mathbb{N}$. Taking the quotient by the involution $A$, as in Example~\ref{example}, yields a static manifold with connected boundary, $R_g = 6$, $|\mathring{\mathrm{Ric}}_g|^2 = 6$, and $H_g=0$ but where $\Sigma = V^{-1}(0)$ has exactly $k$ connected components.
\item The assumption of connected boundary is also essential: without it, there exist static manifolds with boundary that satisfy $R_g = 6$ and $|\mathring{\mathrm{Ric}}_g|^2 \leqslant 6$, yet are not isomorphic to $Nar_{-1,1}(\mathbb S^2)/A$. For instance, the region $Nar_{-1,0}(\mathbb{S}^2)$
$$
\left(\left[-\dfrac{\pi}{2\sqrt{3}}, \dfrac{\pi}{2\sqrt{3}}\right]\times \mathbb S^{2}, g=dr^2+\frac13g_{\mathbb S^{2}},V(r)=\frac{1}{\sqrt{3}}\sin \left(\sqrt{3}r\right)\right),
$$
provides such an example -- it is a compact static manifold with boundary such that $V^{-1}(0)\subset\mathrm{Int}(M)$ is connected, $R_g=6$, $|\mathring{\mathrm{Ric}}_g|^2 = 6$, and $H_g=0$, but its boundary is disconnected.
\item To conclude this remark, we consider a family of examples in which several assumptions from Theorem~\ref{them:Ric0} are dropped.

 Let $ r_h(m) < r_c(m) $ be two positive roots of the function
$$
V_m(r) = \sqrt{1 - r^2 - \frac{2m}{r}}.
$$
\textit{The Schwarzschild-de Sitter static triple} is given by 
$$
\big( [r_h, r_c] \times \mathbb{S}^2,\, g_m = V_m^{-2}\,dr^2 + r^2 g_0,\, V_m \big),
$$
where $ g_0 $ is the standard metric on $ \mathbb{S}^2 $ and $m\in \left(0,\dfrac{1}{3\sqrt{3}}\right)$. Consider the change of variables $ u \colon (0, a) \to (r_h, r_c) $ defined by $ \dfrac{ds}{dr} = V_m(r)^{-1} $, so that the metric takes the form $ g_m = ds^2 + u(s)^2 g_{\mathbb S^2} $. The function $ u $ extends continuously to $ [0, a] $ with $ u(0) = r_h $ and $ u(a) = r_c $, and $ g_m $ extends to a smooth metric on $ [0, a] \times \mathbb{S}^2 $. By reflecting the manifold $ ([0, a] \times \mathbb{S}^2, g_m) $ across its boundary components, we obtain a complete, periodic, rotationally symmetric metric on $ \mathbb{R} \times \mathbb{S}^2 $, which, by abuse of notation, we also denote by $ g_m $ (see the discussion of the Reissner-Nordstr\"om-de Sitter space in~\cite{baltazar2022local}). 

Now let $ r_{ps} = 3m \in (r_h, r_c) $. As shown in~\cite[Section~3]{medvedev2024static}, the sphere $ \{r_{ps}\} \times \mathbb{S}^2 $ is umbilic and has constant mean curvature $ H = 2V_m(r_{ps})/r_{ps} > 0 $. Moreover,  $V_m $ satisfies the equation
$$
\frac{\partial V_m}{\partial \nu} = \frac{H}{2} V_m
$$
on $ \{r_{ps}\} \times \mathbb{S}^2 $, so the second equation of~\eqref{static} holds on this surface.

Let $ s_{ps} = u^{-1}(r_{ps}) $. After performing reflections, we obtain countably many points $ s_i \in \mathbb{R}$ such that the second equation of~\eqref{static} is satisfied on $\{s_i\}\times \mathbb S^2$. Consider three consecutive such points $ s_{i-1} < s_i < s_{i+1} $, where $ s_{i-1} $ and $ s_i $ are separated by a copy of $ r_h $, and $ s_i $ and $ s_{i+1} $ are separated by a copy of $ r_c $. We then obtain two compact static manifolds with two boundary components:
$$
(M_1 = [s_{i-1}, s_i] \times \mathbb{S}^2,g_m, V_m)
\quad\text{and}\quad
(M_2 = [s_i, s_{i+1}] \times \mathbb{S}^2,g_m,V_m).
$$
For $M_1$, the mean curvature of both boundary components is positive; for $M_2$, it is negative, as the outward unit normal points in the direction of decreasing $s$, resulting in a sign change in the second fundamental form. In both cases, the zero-level set of the potential is connected. Moreover, by appropriately choosing the values $s_i$, one can construct static manifolds with two boundary components such that the zero-level set of the potential has an arbitrary number of connected components. Furthermore, the boundary components in such examples can both have positive mean curvature, both have negative mean curvature, or one can be positive and the other negative. For instance, consider the static manifold
$$
([s_{i-1}, s_{i+1}] \times \mathbb{S}^2, g_m, V_m),
$$
where $\{s_{i-1}\} \times \mathbb{S}^2$ has positive mean curvature, $\{s_{i+1}\} \times \mathbb{S}^2$ has negative mean curvature, and the zero-level set of the potential consists of two connected components.

In each of these examples, $R_g=6$, yet the bound $|\mathring{\mathrm{Ric}}_g|^2 \leqslant 6$ is violated.
\end{enumerate}
\end{remark}

\begin{remark}
Theorem~\ref{them:Ric0} also implies the non-existence of a compact static static manifold with connected boundary $(M^3,g,V)$ for which $R_g=6$, $|\mathring{\Ric_g}|^2\leqslant 6$, $V^{-1}(0)\subset  \mathrm{Int}(M)$ is connected, and $H_g\neq 0$.
\end{remark}

For the proof of Theorem~\ref{them:Ric0}, we define the notion of a \textit{Robin static triple}.

\begin{definition} $(M,g,V)$ is a Robin static triple if there exists a positive smooth function $V$, called the potential, which satisfies the following boundary value problem
\begin{equation}\label{static_mixed}
\left\{
   \begin{array}{rcl}
\Hess_{g}V-(\Delta_{g} V){g}-V\Ric_{g} &= &0\quad\mbox{in}\quad M,\\
V& = &0\quad\mbox{on}\quad\partial_D M,\\
\dfrac{\partial V}{\partial \nu}g-VB_{g}& = &0\quad\mbox{on}\quad\partial_R M,
\end{array}
   \right.
\end{equation}
where $\partial M=\partial_D M\sqcup \partial_R M$. 
\end{definition}

The letter~$D$ denotes the Dirichlet boundary condition, and~$R$ denotes the Robin boundary condition. Static triples are a special case of Robin static triples in which $\partial_R M = \varnothing$. When~$\partial_D M = \varnothing$, we obtain a particular case of a static manifold with boundary, in which the potential function~$V$ does not change sign on~$M$. We believe that Robin static triples may be of independent interest and could serve as a useful framework for studying both static triples and static manifolds with boundary. To the best of our knowledge, Robin static triples first appeared implicitly in the literature in~\cite{cruz2023static} (see, for example, Theorem~3). 

One of the ingredients in the proof of Theorem~\ref{thm:impos} is the following technical lemma which is an analogue of Theorem B $(ii)$ in~\cite{ambrozio2017static}.

\begin{lemma}\label{lemma:mixed}
Let $(M^3, g, V)$ be a compact connected Robin static triple with positive scalar curvature. Suppose that at least one connected component of the boundary is topologically a sphere. If the components of $\partial_R M$ have positive mean curvature and at least one component of $\partial_D M$ is not locally area-minimizing, then there is exactly one such component and $\partial_R M = \varnothing$, i.e., $(M^3,g,V)$ is a static triple. 
\end{lemma}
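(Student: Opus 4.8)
The plan is to adapt Ambrozio's proof of Theorem~B$(ii)$ in~\cite{ambrozio2017static}, the genuinely new point being to eliminate the strictly mean-convex components of $\partial_R M$. Write $R_g\equiv c>0$ for the (constant) scalar curvature and let $\Sigma_0$ be a component of $\partial_D M$ that is not locally area-minimizing; if $\partial M=\Sigma_0$ we are done, so assume $\partial M\supsetneq\Sigma_0$. I would first record the boundary geometry forced by~\eqref{static_mixed}. Restricting the interior equation to $\partial_D M$, where $V=0$ and, by the trace identity, $\Delta_g V=0$, gives $\partial_\nu V\cdot B_g=0$; since $|\nabla_g V|$ is a positive constant on each component of $\partial_D M$ (differentiate $|\nabla_g V|^2$ tangentially and use the interior equation), every component of $\partial_D M$ is totally geodesic, so $\Sigma_0$ is a closed minimal surface. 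By the boundary equation each component of $\partial_R M$ is umbilic with constant mean curvature $H_g=2\,\partial_\nu V/V>0$, hence a strictly mean-convex barrier.

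The analytic heart is that the potential obstructs interior stable minimal surfaces. If $S\subset\mathrm{Int}(M)$ is a closed, embedded, two-sided, \emph{stable} minimal surface disjoint from $\partial_D M$ (so $V>0$ on $S$), with unit normal $N$, then $\Delta_g V=-\tfrac c2 V$, the $(N,N)$-component of the interior equation, and $H_S=0$ give $\Delta_S V=-V\,\Ric_g(N,N)$, hence $\int_S|\nabla_S V|^2=\int_S V^2\,\Ric_g(N,N)$; testing the stability inequality of $S$ with $\phi=V|_S$ then gives $0\geqslant\int_S|A_S|^2V^2$. Thus $S$ is totally geodesic, $V|_S$ is a first eigenfunction of $L_S=\Delta_S+\Ric_g(N,N)$ with eigenvalue $0$, and, by Gauss--Bonnet and $R_g>0$, $S$ is a two-sphere with $\mathrm{Area}(S)\leqslant 8\pi/R_g$ (the non-orientable case being absorbed by passing to the orientable double cover, again a Robin static triple). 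If $S$ in addition locally minimizes area, then the positive Jacobi field $V|_S$, the constancy of $R_g$, and the Bray--Brendle--Neves product splitting of a neighbourhood of $S$, propagated along the level sets of $V$ via the static equations, force $(M,g,V)$ to be equivalent to a Nariai-type cylinder $Nar_{-k,k}(\mathbb S^2)$ or a quotient of it, each boundary component of which is a round totally geodesic two-sphere that locally minimizes area.

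Now I would argue by contradiction. First one verifies that $\Sigma_0$ is \emph{strictly unstable}: it is not strictly stable (strictly stable minimal surfaces locally minimize area), and the degenerate-stable case is ruled out by the static structure and the rigidity above. Deforming $\Sigma_0$ into $M$ along its first Jacobi eigenfunction then gives, for small $s>0$, a strictly mean-convex surface $\Sigma_0^s$ with $\mathrm{Area}(\Sigma_0^s)<\mathrm{Area}(\Sigma_0)$. Minimizing area among surfaces isotopic to $\Sigma_0^s$ in $M$ — with $\Sigma_0^s$ and the components of $\partial_R M$ as strictly mean-convex barriers and the components of $\partial_D M$ as minimal barriers — yields (by Meeks--Simon--Yau) a smooth closed embedded stable minimal surface $S$ with $\mathrm{Area}(S)\leqslant\mathrm{Area}(\Sigma_0^s)<\mathrm{Area}(\Sigma_0)$; it does not collapse, because $\Sigma_0^s$ separates two nonempty parts of $\partial M$ and so does not bound a ball, and by the maximum principle it avoids $\Sigma_0^s$ and $\partial_R M$. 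Hence either $S\subset\mathrm{Int}(M)$ with $S\cap\partial_D M=\varnothing$ — and then the second paragraph makes $(M,g,V)$ a Nariai-type cylinder, so $\Sigma_0$ locally minimizes area, a contradiction — or $S$ is a component $N\neq\Sigma_0$ of $\partial_D M$, in which case $\Sigma_0$ and $N$ cobound a region $U\cong\mathbb S^2\times[0,1]$ on which $V$ vanishes exactly along $\partial U=\Sigma_0\sqcup N$, so that $(U,g|_U,V|_U)$ is a compact static triple with disconnected boundary; by the classification of such triples in~\cite{ambrozio2017static} it is a Nariai-type cylinder, which again forces $\Sigma_0$ to be round and totally geodesic with $\lambda_1(-L_{\Sigma_0})=0$, contradicting strict instability. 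Therefore $\partial M=\Sigma_0$, so $\partial_R M=\varnothing$ and $\partial_D M$ has the single component $\Sigma_0$; that is, $(M,g,V)$ is a static triple. (The hypothesis that some boundary component is a sphere is used to control the isotopy classes in the minimization and the topology in the model identifications.)

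The step I expect to be the main obstacle is the rigidity of the second paragraph: extracting the positive Jacobi field $V|_S$ from the static structure is immediate, but upgrading ``locally area-minimizing $+$ positive Jacobi field $+$ $R_g\equiv c>0$'' to the sharp equality $\mathrm{Area}(S)=8\pi/R_g$ and then propagating the local product structure to a global identification with a model — the mixed-boundary analogue of the delicate part of~\cite[Theorem~B]{ambrozio2017static} — requires real care, as does the exclusion of the degenerate-stable case for $\Sigma_0$. The boundary computations, the mean-convex deformation, and the minimization itself are comparatively routine.
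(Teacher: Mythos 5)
Your argument proves a statement that is strictly stronger than the lemma and in fact false, which pinpoints the gap. You conclude ``$\partial M=\Sigma_0$, so $\partial_D M$ has the single component $\Sigma_0$,'' but the lemma only asserts that the \emph{non-minimizing} component is unique and that $\partial_R M=\varnothing$; it must allow additional locally area-minimizing Dirichlet components. The Schwarzschild--de Sitter static triple (a Robin static triple with $\partial_R M=\varnothing$) satisfies every hypothesis and has two boundary components --- the area-minimizing black-hole horizon and the unstable cosmological horizon --- so any argument forcing $\partial M=\Sigma_0$ is broken. The error sits in the branch of your final dichotomy where the Meeks--Simon--Yau minimizer $S$ is a component $N\neq\Sigma_0$ of $\partial_D M$: you assert that the region $U$ between $\Sigma_0$ and $N$ is a compact static triple with disconnected boundary and hence, ``by the classification of such triples in~\cite{ambrozio2017static},'' a Nariai-type cylinder. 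No such classification exists (Ambrozio classifies only under additional hypotheses such as local conformal flatness or a Ricci pinching); Schwarzschild--de Sitter itself is a static triple with disconnected boundary that is not Nariai, and in that example this branch produces no contradiction at all. Relatedly, your scheme has no mechanism for killing $\partial_R M$ when other, minimizing, Dirichlet components are present. The paper handles exactly this by a homological argument you are missing: the area-minimizer isotopic to $\Sigma_0$ is homologous to a nonnegative integer combination $\sum_j n_j[\Sigma_j']$ of the minimizing Dirichlet components, so $[\Sigma_0]-\sum_j n_j[\Sigma_j']$ lies in $\ker\iota_*$, which is generated by $\partial[M]=\sum_i[\Sigma_i]-\sum_j[\Sigma_j']-[\partial_R M]$ in the long exact sequence of $(M,\partial M)$; comparing coefficients forces $\partial_R M=\varnothing$, $l=1$ and $n_j=1$.

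Separately, the step you flag as the ``main obstacle'' --- upgrading a locally area-minimizing interior stable surface with Jacobi field $V|_S$ to a global model, or otherwise excluding it --- is indeed where the paper does its real work, and it does so differently: rather than a Bray--Brendle--Neves splitting propagated along level sets of $V$, it flows the minimizing surface by parallel surfaces at speed $V$ in the complete conformal metric $V^{-2}g$, shows the flow preserves ``totally geodesic and locally minimizing,'' and derives a contradiction from its long-time behaviour (the flow cannot reach $\partial_D M$ in finite time by completeness of $V^{-2}g$, cannot touch $\partial_R M$ since $H_g\neq 0$ there while $H_t=0$, and the limit analysis via Schoen--Yau compactness excludes both finite and infinite maximal existence time). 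This argument is run on the universal cover, whose compactness is exactly where the hypothesis that some boundary component is a sphere enters --- not, as you suggest, in controlling isotopy classes. Your stability computation with the test function $V|_S$ is correct and is a reasonable starting point, but as it stands the proposal neither closes the rigidity step nor reaches the actual conclusion of the lemma.
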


\begin{remark}
Here, we say that a boundary component $\Sigma$ of a manifold $(M,g)$ is locally area-minimizing if there exists an ambient Riemannian manifold $(\widetilde M, \tilde g)$ containing $(M,g)$ isometrically as a subset, such that $\Sigma \subset \mathrm{Int}(\widetilde M)$ and $\Sigma$ is a locally area-minimizing minimal surface in $(\widetilde M, \tilde g)$.\end{remark}

\begin{example}
Consider the Schwarzschild--de Sitter static triple, as defined above. Then $([r_h(m), 3m] \times \mathbb{S}^2, g_m, V_m)$ is a Robin static triple with boundary, in which the Dirichlet part of the boundary $\{r_h(m)\} \times \mathbb{S}^2$ (the black hole horizon) is locally area-minimizing and the Robin part $\{3m\}\times\mathbb{S}^2$ has positive mean curvature. In contrast, $([3m, r_c(m)] \times \mathbb{S}^2, g_m, V_m)$ is another Robin static triple in which the Dirichlet part $\{r_c(m)\} \times \mathbb{S}^2$ (the cosmological horizon) is unstable, but the Robin part $\{3m\}\times\mathbb{S}^2$ has negative mean curvature.
\end{example}

Our second result in this paper is an analogue of Theorem 2 in~\cite{cruz2023static}.

\begin{theorem}\label{CN}
Let $(M^3,g,V)$ be a compact static manifold with boundary such that $R_g=6$. Suppose that $\Sigma=V^{-1}(0)$ is connected. Then
\begin{itemize}
\item[(i)] If $H_g=0$ and $\Sigma\cap \partial M \neq \varnothing$, then $\Sigma$ is a free boundary totally geodesic two-disk and 
\begin{equation*}
|\Sigma|\leqslant 2\pi.    
\end{equation*}
Moreover, in this case, equality holds if and only if $(M^3,g)$ is isometric to the standard spherical cap $(\mathbb S^3_+,g_{\mathbb S^3})$ and $V\in span\{x_1,x_2,x_3\}$, where $x_1,\ldots,x_{4}$ are the coordinates of $\mathbb S^3_+$ in $\mathbb R^4$.
\item[(ii)] If $H_g\geqslant 0$ and $\Sigma\cap \partial M=\varnothing$, then $\Sigma$ is a totally geodesic two-sphere and 
\begin{equation*}
|\Sigma|<4\pi.
\end{equation*}
\end{itemize}
\end{theorem}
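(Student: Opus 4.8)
The plan is to run the Shen/Boucher--Gibbons--Horowitz-type argument (as adapted to the boundary setting by Cruz--Nunes) built around the auxiliary function $P:=|\nabla_g V|^2+V^2$. First I would record the standard structure of the zero set. Tracing the static system gives $\Delta_g V=-3V$ and $\Hess_g V=V(\Ric_g-3g)$ (using $R_g=6$, $n=3$). A simultaneous vanishing of $V$ and $\nabla_g V$ at a point would force $V\equiv 0$ along geodesics via the linear second--order ODE coming from $\Hess_g V=V(\Ric_g-3g)$, so $\nabla_g V\neq 0$ on $\Sigma$; thus $\Sigma$ is a smooth embedded surface separating $M$ into $M^\pm=\{\pm V\ge0\}$. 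On $\Sigma$ one has $V=0$ and $\Delta_g V=0$, hence $\Hess_g V\equiv0$ there, so $\Sigma$ is totally geodesic; differentiating $|\nabla_g V|^2$ tangentially shows $|\nabla_g V|\equiv\kappa>0$ on the connected surface $\Sigma$. In case (i), $H_g=0$ together with the boundary condition forces $B_g\equiv 0$ (so $\partial M$ is totally geodesic) and $\partial V/\partial\nu\equiv0$ on $\partial M$; since $\nabla_g V$ is then normal to $\Sigma$ and tangent to $\partial M$ along $\Sigma\cap\partial M$, the surface $\Sigma$ is free boundary and $\partial\Sigma=\Sigma\cap\partial M$ is a geodesic of $\Sigma$.

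Next comes the key identity. On $M\setminus\Sigma$ set $\mathcal L:=\Delta_g-\langle\nabla_g\log|V|,\nabla_g\,\cdot\,\rangle$. A Bochner computation using $\Delta_g V=-3V$, $\Hess_g V=V(\Ric_g-3g)$ and $|\Ric_g-3g|^2=|\mathring{\Ric}_g|^2+3$ gives
\[
\mathcal L P=2|\Hess_g V|^2-6V^2=2V^2|\mathring{\Ric}_g|^2\ \ge\ 0 .
\]
Moreover $P\equiv\kappa^2$ and $\nabla_g P\equiv0$ on $\Sigma$, and, from $\Hess_g V=0$ on $\Sigma$ together with the Gauss equation for the totally geodesic $\Sigma$ (which gives $K_\Sigma=3-\Ric_g(N,N)$, $N=\nabla_g V/|\nabla_g V|$),
\[
\Delta_g P\big|_\Sigma=\Hess_g P(N,N)\big|_\Sigma=2\kappa^2\big(\Ric_g(N,N)-2\big)=2\kappa^2\,(1-K_\Sigma).
\]

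The crux is to show $P\le\kappa^2$ on $M$. Since $\mathcal L P\ge0$, an interior maximum of $P$ would make $P$ constant by the strong maximum principle, forcing $\mathring{\Ric}_g\equiv0$ — the rigid case treated below. A maximum of $P$ over $M$ attained on $\partial M\setminus\Sigma$ is excluded by Hopf's boundary lemma: one computes $\partial P/\partial\nu=H_g V^2(\Ric_g(\nu,\nu)-2)$ on $\partial M$ (using the static boundary condition, umbilicity of $\partial M$, and $H_g=\mathrm{const}$), which vanishes when $H_g=0$ (case (i)); in case (ii), where $\Sigma\cap\partial M=\varnothing$, one runs the argument on the component of $M\setminus\Sigma$ away from $\partial M$, using $H_g\ge0$ to control the remaining Robin boundary term. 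Granting $P\le\kappa^2$, Taylor expanding $P$ along the geodesics normal to $\Sigma$ and invoking the formula for $\Delta_g P|_\Sigma$ yields $K_\Sigma\ge1$ on $\Sigma$. Gauss--Bonnet then closes the estimates: in case (i), $\int_\Sigma K_\Sigma=2\pi\chi(\Sigma)$ (geodesic boundary), and $K_\Sigma\ge1>0$ with $\partial\Sigma\neq\varnothing$ forces $\Sigma$ to be a disk, so $|\Sigma|\le\int_\Sigma K_\Sigma=2\pi$; in case (ii), $\Sigma$ is closed, $K_\Sigma>0$ gives $\chi(\Sigma)>0$, hence $\Sigma\cong\mathbb S^2$ (the two-sidedness ruling out $\mathbb{RP}^2$) and $|\Sigma|\le\int_\Sigma K_\Sigma=4\pi$.

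Finally the rigidity. In case (i), $|\Sigma|=2\pi$ forces $K_\Sigma\equiv1$, hence $\Hess_g P(N,N)|_\Sigma\equiv0$; combined with $P\equiv\kappa^2$, $\nabla_g P\equiv0$ on $\Sigma$, and $\mathcal L(\kappa^2-P)\le0$, a standard propagation/unique-continuation argument (static metrics being real-analytic) upgrades this to $P\equiv\kappa^2$ on $M$, so $\mathring{\Ric}_g\equiv0$ and $(M,g)$ has constant curvature $1$; then $\partial M$ is totally geodesic, $V$ is the restriction of a linear function, and $\Sigma$ is a totally geodesic hemisphere of area $2\pi$, which identifies $(M,g,V)$ with $(\mathbb S^3_+,g_{\mathbb S^3},V)$, $V\in\mathrm{span}\{x_1,x_2,x_3\}$. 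In case (ii) the inequality is strict: equality $|\Sigma|=4\pi$ would again yield constant curvature $1$, so $M$ would be a domain in $\mathbb S^3$ and $\Sigma=\{x_i=0\}$ a great $2$-sphere; but the static boundary condition forces each component of $\partial M$ to be a distance sphere $\{\langle x,a\rangle=d\}$ with $a\perp e_i$, every one of which meets $\{x_i=0\}$, so $\Sigma\cap\partial M\neq\varnothing$, contradicting the hypothesis. I expect the maximum principle step to be the main obstacle — justifying the interior and Hopf maximum principles in the presence of the drift $\nabla_g\log|V|$, which blows up along $\Sigma$, and disposing of the Robin boundary term in case (ii) when $H_g>0$ — with the rigidity propagation being a more routine technical point.
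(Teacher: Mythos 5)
Your route is genuinely different from the paper's: you run a pointwise Shen-type maximum principle on $P=|\nabla_g V|^2+V^2$ with the drift operator $\mathcal L=\Delta_g-\langle\nabla_g\log|V|,\nabla_g\cdot\rangle$, whereas the paper integrates Schoen's Pohozaev identity with $X=\nabla_g V$ over a component $\Omega$ of $M\setminus\Sigma$ to get $\kappa\bigl(2\pi\chi(\Sigma)-|\Sigma|\bigr)=\int_\Omega V|\mathring{\Ric}_g|^2\,dv_g$ (plus boundary terms in $H_g$). Your computations of $\mathcal L P=2V^2|\mathring{\Ric}_g|^2$, of $\partial P/\partial\nu=H_gV^2(\Ric_g(\nu,\nu)-2)$ on $\partial M$, and of $\Hess_g P(N,N)|_\Sigma=2\kappa^2(1-K_\Sigma)$ are all correct, and the inequality parts of (i) and (ii) do go through this way: for (i) the Hopf lemma on the relative interior of $S$ (where the drift is bounded) plus $\partial P/\partial\nu=0$ forces $\max P=\kappa^2$, and for (ii) the one-sided Taylor expansion from the component not containing $\partial M$ already gives $\Hess_gP(N,N)\le0$, hence $K_\Sigma\ge1$ and Gauss--Bonnet. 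Note that writing $\mathcal L P=V\,\mathrm{div}(V^{-1}\nabla_g P)$ and integrating over $\Omega$ reproduces exactly the paper's identity, so the two approaches carry the same information; the integral form simply avoids all maximum-principle delicacies.

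The genuine gap is in your equality/rigidity analysis. First, in (i), from $K_\Sigma\equiv1$ you only get that $\kappa^2-P\ge0$ vanishes on $\Sigma$ together with its first and second normal derivatives; real-analyticity does not upgrade this to $P\equiv\kappa^2$ (compare $-t^4$), and the Hopf lemma cannot be invoked at $\Sigma$ because the drift $\nabla_g\log|V|$ blows up like $1/\mathrm{dist}(\cdot,\Sigma)$ there, so the ``standard propagation'' you appeal to is not available as stated. Second, in (ii), strictness requires showing $\mathring{\Ric}_g\equiv0$ on the component of $M\setminus\Sigma$ \emph{containing} $\partial M$, and there your boundary control fails: when $H_g>0$ the sign of $\partial P/\partial\nu=H_gV^2(\Ric_g(\nu,\nu)-2)$ is not determined, so the maximum of $P$ on that component cannot be pushed to $\Sigma$. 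Both problems disappear if you replace the pointwise argument by the integrated identity: in (i) equality gives $\int_\Omega V|\mathring{\Ric}_g|^2=0$ on each component directly, and in (ii) the boundary contribution appears as $H_g\bigl(\tfrac{H_g^2}{4}+1\bigr)\int_{\partial M}V\,ds_g\ge0$, which has a definite sign. This is precisely how the paper closes both rigidity statements (the final identification of $(\mathbb S^3_+,g_{\mathbb S^3})$ and $V\in\mathrm{span}\{x_1,x_2,x_3\}$ then proceeds essentially as in your last step, via the Einstein condition and \cite[Proposition 4.1]{ho2020deformation}).
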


The proof of this theorem follows closely the one presented in~\cite[Section 3]{cruz2023static} and relies on an analogue of  \cite[Theorem 1.12]{medvedev2024static} for the case where $\Sigma$ intersects $\partial M$ (see Theorem~\ref{thm:aux} below).

All manifolds considered in this paper are assumed to be orientable.

\subsection{Acknowledgements} The author is grateful to Lucas Ambrozio for his interest in this work and for fruitful discussions. This article is an output of a research project implemented as part of the Basic Research Program at HSE University.
 
\section{Proof of Theorem~\ref{them:Ric0}}

We start with the proof of Lemma~\ref{lemma:mixed}.

\begin{proof}[Proof of Lemma~\ref{lemma:mixed}]

The proof is a straightforward adaptation of the proof of Theorem~B in~\cite{ambrozio2017static} (see also Theorem~10 in~\cite{cruz2024minmax}), so we will be brief.

\medskip

{\bf Claim 1.} \textit{There are no closed minimal surfaces in $\mathrm{Int}(M)$ whose orientable double cover is stable.}

\medskip

Following the argument in part~$(i)$ of Theorem~B, we conclude that the universal cover of $M$ is compact; this relies on the assumption that at least one connected component of $\partial_D M$ is a sphere. Without loss of generality, we may therefore assume that $M$ is simply connected (otherwise, we work on the universal cover). 

Suppose, for contradiction, that $\mathrm{Int}(M)$ contains a closed stable minimal surface $\Sigma$. Since $M$ is simply connected, $\Sigma$ is separating. Let $\partial_n M$ denote the union of components of $\partial_D M$ which are not locally area-minimizing and $\partial_l M$ the locally area-minimizing components of $\partial_D M$. Let $\Omega$ be the connected component of $M \setminus \Sigma$ that contains some components of $\partial_n M$. Minimizing area in the homology class of $\Sigma$ within $\Omega$, we obtain a locally area-minimizing surface $\phi \colon S \to \mathrm{Int}(M)$, as shown in Step~1 of the proof of Theorem~10 in~\cite{cruz2024minmax}. Note that the presence of non-minimal boundary components does not affect the argument, since minimizing sequences cannot approach such components. Now suppose $\phi(S)$ is orientable. Consider the conformal manifold $$(\bar M, \bar g) = (M \setminus \partial M, V^{-2}g).$$ As argued in Section~7 of~\cite{ambrozio2017static}, this space is conformally compact and has bounded geometry. Therefore, for $\delta > 0$ less than the injectivity radius of $(\bar M, \bar g)$, we can define a smooth flow by parallel surfaces $\Phi \colon [0,\delta) \times S \to M$ in $(\bar M, \bar g)$ by
\begin{equation*}
\frac{d}{dt}\Phi_t(x)=V(\Phi_t(x))N_t(x), \quad x\in S, \quad \Phi_0=\phi.
\end{equation*}
Here, $N_t$ denotes the unit normal vector field to the surface $S_t := \Phi_t(S_0)$, where $S_0 = \phi(S)$. For $t \in [0,\delta)$, the surfaces $S_t$ are compact and embedded. Let $H_t$ be the mean curvature of $S_t$. It is well known that
$$
\frac{\partial}{\partial t} H_t = -\Delta_{S_t} V - \left( \mathrm{Ric}_g(N_t,N_t) + |B_{S_t}|^2 \right) V.
$$
Using the relation between $\Delta_g V$ and $\Delta_{S_t} V$, and substituting the first equation in~\eqref{static_mixed}, we obtain
$$
\frac{\partial}{\partial t} H_t = -\langle \nabla^g V, N_t \rangle H_t - |B_{S_t}|^2 V \leqslant -\langle \nabla^g V, N_t \rangle H_t.
$$
Since $H_0=0$, Gr\"onwall's inequality implies $H_t \leqslant 0$ for all $t \in [0,\delta)$. The formula for the first variation of volume then implies that the area $|S_t|$ is non-increasing. But $S_0$ is locally area-minimizing, so $|S_t|$ must be constant. Hence, $H_t = 0$ for all $t \in [0,\delta)$, and consequently $|B_{S_t}|^2 V = 0$, which forces each $S_t$ to be totally geodesic. It follows that $(S_t, g|_{S_t})$ is isometric to $(S_0, g_0)$ for all $t \in [0,\delta)$. 

Now, since $(\bar M, \bar g) = (M \setminus \partial M, V^{-2}g)$ is complete, the flow cannot reach $\partial_D M$ in finite time. Moreover, it cannot touch $\partial_R M$, because $H_g$ is constant and non-zero there, while $H_t = 0$. Thus, the surfaces $S_t$ remain in $\mathrm{Int}(M)$ for as long as the flow exists. Let $T^*$ be the maximal time such that the flow exists and remains smooth. Suppose $T^* < \infty$. Consider a sequence $t_i \to T^*$. Then $\{S_{t_i}\}$ is a sequence of locally area-minimizing surfaces with uniformly bounded area. By the compactness theory for stable minimal surfaces~\cite[Theorem~5.1]{schoen1979existence}, a subsequence converges smoothly to a limit surface $S_{T^*}$, which is also locally area-minimizing. By~\cite[Theorem~5.1]{schoen1979existence}, each $S_{t_i}$ is a topological sphere. Since the convergence is smooth, $S_{T^*}$ must also be a sphere if orientable. However, if $S_{T^*}$ were orientable, the flow could be continued beyond $T^*$, contradicting maximality. Therefore, $S_{T^*}$ must be non-orientable. But the only possible non-orientable surface arising as such a limit would be a topological $\mathbb{RP}^2$, which is impossible, since $M$ is simply connected.

The case where $T^*=\infty$ is treated in the same way as in the proof of Theorem 10 in~\cite{cruz2024minmax}.

\medskip

{\bf Claim 2.} \textit{If one of the components of $\partial_D M$ is not locally area-minimizing, then there is exactly one such component and $\partial_R M = \varnothing$.}

\medskip

We follow the argument in Lemma~3.3 of~\cite{lee2015penrose}. Recall that $\partial_D M = \partial_n M \sqcup \partial_l M$. Let $\Sigma_i$, $i = 1, \dots, l$, be the connected components of $\partial_n M$, and $\Sigma_j'$, $j = 1, \dots, l'$, those of $\partial_l M$. Consider $\Sigma_1$. Minimizing area in its isotopy class yields a surface $\widetilde\Sigma_1$ such that each of its connected components (except possibly those of arbitrarily small area) is parallel to a locally area-minimizing surface. Moreover, $\widetilde\Sigma_1$ is homologous to $\Sigma_1$ in $H_2(M; \mathbb{Z})$ (see~\cite[Theorem~$1'$ and Section~3]{meeks1982embedded}). As established earlier, there are no closed minimal surfaces in $\mathrm{Int}(M)$ whose orientable double cover is stable, and any surface of arbitrarily small area is homologically trivial in $M$. It follows that there exist natural numbers $n_1, \ldots, n_{l'}$ such that
$$
[\Sigma_1] = [\widetilde\Sigma_1] = \sum_{j=1}^{l'} n_j [\Sigma_j'] \quad \text{in } H_2(M; \mathbb{Z}).
$$

Now consider the following segment of the long exact sequence of the pair $(M, \partial M)$:
$$
H_3(M, \partial M) \overset{\partial}{\longrightarrow} H_2(\partial M; \mathbb{Z}) \overset{\iota_*}{\longrightarrow} H_2(M; \mathbb{Z}).
$$
The kernel of $\iota_*$ is generated by the image of the fundamental class:
$$
\partial [M] = \sum_{i=1}^l [\Sigma_i] - \sum_{j=1}^{l'} [\Sigma_j'] - [\partial_R M].
$$

On the other hand, from above we have
$$
[\Sigma_1] - \sum_{j=1}^{l'} n_j [\Sigma_j'] = 0 \quad \text{in } H_2(M; \mathbb{Z}),
$$
so
$$
[\Sigma_1] - \sum_{j=1}^{l'} n_j [\Sigma_j'] \in \ker \iota_*.
$$
Therefore, this class must be an integer multiple of $\partial [M]$. This is only possible if $\partial_R M = \varnothing$, $l = 1$, and $n_j = 1$ for all $j = 1, \dots, l'$. In particular, there is exactly one non-locally-area-minimizing boundary component, and the Robin part of the boundary is empty.
\end{proof}

As an application of Lemma~\ref{lemma:mixed}, we obtain the following lemma.

\begin{lemma}\label{thm:impos}
Let $(M^3,g,V)$ be a static manifold with connected boundary and scalar curvature $6$. If $H_g>0$ and $\Sigma=V^{-1}(0)\subset  \mathrm{Int}(M)$ is connected, then $\Sigma$ is a locally area-minimizing minimal two-sphere and $|\Sigma|<\dfrac{4\pi}{3}$. 
\end{lemma}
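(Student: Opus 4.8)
The plan is to cut $M$ along $\Sigma$ and apply Lemma~\ref{lemma:mixed} to one of the two pieces. As a preliminary step I would record the standard properties of $\Sigma=V^{-1}(0)$: since $V=0$ and $\Delta_gV=-3V=0$ on $\Sigma$, the first equation of~\eqref{static} forces $\Hess_gV=0$ along $\Sigma$; hence $0$ is a regular value of $V$, $\Sigma$ is a smooth closed embedded surface, it is totally geodesic (so minimal), and $|\nabla V|$ is a positive constant on $\Sigma$ (as $\Sigma$ is connected). Since $\partial M$ is connected and $V$ does not vanish there, $V$ has a fixed sign on $\partial M$; replacing $V$ by $-V$ if necessary we may assume $V>0$ on $\partial M$. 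Because $\Sigma$ is connected and two-sided, $M\setminus\Sigma$ has exactly two components $\Omega_+\supset\partial M$ and $\Omega_-$, with $V>0$ on $\Omega_+$ and $V<0$ on $\Omega_-$. Then $(\overline{\Omega_+},g,V)$ is a compact connected Robin static triple with Dirichlet boundary $\partial_D\overline{\Omega_+}=\Sigma$ and Robin boundary $\partial_R\overline{\Omega_+}=\partial M$ of mean curvature $H_g>0$, while $(\overline{\Omega_-},g,-V)$ is a static triple with connected boundary $\Sigma$.

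Next I would establish that $\Sigma$ is a topological sphere: doubling $\overline{\Omega_-}$ across its totally geodesic boundary produces a closed static triple whose potential has connected zero set $\Sigma$, so $\Sigma\cong\mathbb S^2$ by~\cite[Theorem~B]{ambrozio2017static} (the global orientability assumption rules out the $\mathbb{RP}^2$ alternative); alternatively one may invoke~\cite[Theorem~1.12]{medvedev2024static}. With this in hand I can apply Lemma~\ref{lemma:mixed} to $(\overline{\Omega_+},g,V)$: it is a compact connected Robin static triple with $R_g=6>0$, it has a spherical boundary component, and its unique Robin component $\partial M$ has positive mean curvature. If $\Sigma$ — the only component of $\partial_D\overline{\Omega_+}$ — were not locally area-minimizing, Lemma~\ref{lemma:mixed} would force $\partial_R\overline{\Omega_+}=\varnothing$, i.e.\ $\partial M=\varnothing$, contradicting the hypotheses. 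Hence $\Sigma$ is locally area-minimizing, and in particular a stable minimal surface.

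Given stability, the area bound is classical. The Gauss equation together with $R_g=6$ and $B_\Sigma=0$ gives $\Ric_g(N,N)=3-K_\Sigma$ on $\Sigma$, where $N$ is a unit normal and $K_\Sigma$ the Gauss curvature. Inserting $f\equiv1$ into the stability inequality $\int_\Sigma|\nabla f|^2\geqslant\int_\Sigma\big(\Ric_g(N,N)+|B_\Sigma|^2\big)f^2$ yields $0\geqslant\int_\Sigma(3-K_\Sigma)=3|\Sigma|-2\pi\chi(\Sigma)=3|\Sigma|-4\pi$, so $|\Sigma|\leqslant\frac{4\pi}{3}$.

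It remains to rule out equality. If $|\Sigma|=\frac{4\pi}{3}$, then all the inequalities above are equalities, so $\Ric_g(N,N)\equiv0$ and $K_\Sigma\equiv3$ on $\Sigma$, the constant $1$ lies in the kernel of the Jacobi operator of $\Sigma$, and $\Sigma$ is an infinitesimally rigid, area-minimizing round two-sphere of curvature $3$. A standard rigidity argument — the same parallel-surface flow (with speed $V$) used in the proof of Lemma~\ref{lemma:mixed} — then shows that a one-sided neighborhood of $\Sigma$ in $\overline{\Omega_+}$ is isometric to the metric product $[0,\varepsilon)\times\mathbb S^2$ with the round factor of curvature $3$; continuing the flow of these totally geodesic leaves, and using the compactness of $\overline{\Omega_+}$ together with the absence (established inside the proof of Lemma~\ref{lemma:mixed}) of closed minimal surfaces in $\mathrm{Int}(M)$ with stable orientable double cover, the leaves must reach $\partial M$, whence $\partial M$ would be minimal — contradicting $H_g>0$. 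Therefore $|\Sigma|<\frac{4\pi}{3}$. The point I expect to require the most care is the a priori sphere property needed to invoke Lemma~\ref{lemma:mixed}, and, secondarily, carrying out the equality-case rigidity while controlling the behaviour of the flow near $\partial M$.
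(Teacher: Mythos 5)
Your proposal is correct and follows essentially the same route as the paper: cut $M$ along $\Sigma$, identify $\Sigma$ as a two-sphere (the paper uses Boucher--Gibbons--Horowitz and Shen on the static triple bounded by $\Sigma$; your doubling argument is an acceptable variant), apply Lemma~\ref{lemma:mixed} to the Robin static triple on the side containing $\partial M$ to get local area-minimization, and use the stability inequality to obtain $|\Sigma|\leqslant \frac{4\pi}{3}$. For the strict inequality the paper is slightly more direct than your flow-to-the-boundary argument: it invokes the local rigidity theorem of Bray--Brendle--Neves~\cite{bray2010rigidity} to produce a product neighbourhood of $\Sigma$ and then observes that a single parallel totally geodesic (hence stable) leaf in the interior already contradicts Claim~1 of Lemma~\ref{lemma:mixed}, so there is no need to propagate the foliation all the way to $\partial M$.
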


\begin{proof}
Consider the connected components of $M \setminus \Sigma$. Let $\Omega$ be the component that does not contain $\partial M$. Then $(\Omega, g, V)$ is a static triple with scalar curvature $6$, and its boundary is $\Sigma$. By the Boucher--Gibbons--Horowitz theorem~\cite{boucher1984uniqueness} and Shen's result~\cite{shen1997note}, it follows that $\Sigma$ is a two-sphere.

Now consider $(M \setminus \Omega, g, V)$, which forms a Robin static triple. Suppose that $\Sigma$, which is equal to $\partial_D (M \setminus \Omega)$, is not locally area-minimizing. Then, by Lemma~\ref{lemma:mixed}, $\partial M$, which is equal to $\partial_R (M \setminus \Omega)$, is empty. This contradicts the assumption that $\partial M$ is non-empty. Therefore, $\Sigma$ must be locally area-minimizing. Since $\Sigma$ is totally geodesic and stable, the stability inequality implies $|\Sigma| \leqslant \dfrac{4\pi}{3}$.

If $|\Sigma| = \dfrac{4\pi}{3}$, then, by~\cite[Theorem~1]{bray2010rigidity}, there exists a neighbourhood $U$ of $\Sigma$ in $(M, g)$ isometric to $(-\varepsilon, \varepsilon) \times \Sigma$ with the product metric. Without loss of generality, we identify $U$ with $(-\varepsilon, \varepsilon) \times \Sigma$. There exists a totally geodesic surface $S \subset (-\varepsilon, 0) \times \Sigma \subset \mathrm{Int}(M \setminus \Omega)$ isometric to $\Sigma$. But $S$ is also locally area-minimizing and, in particular, stable. This contradicts Claim~1 in the proof of Lemma~\ref{lemma:mixed}. Hence, $|\Sigma| < \dfrac{4\pi}{3}$.
\end{proof}

We are now ready to prove Theorem~\ref{them:Ric0}.

\begin{proof}[Proof of Theorem~\ref{them:Ric0}]
Let $\Omega$ be the connected component of $M \setminus \Sigma$ that does not contain $\partial M$, so that $\partial \Omega = \Sigma$. Then $(\Omega, g, V)$ is a static triple with $R_g=6$ and $|\mathring{\mathrm{Ric}}_g|^2 \leqslant 6$. By~\cite[Theorem~A]{ambrozio2017static}, one of the following alternatives holds:

\medskip

\textit{Case 1.} $\mathring{\mathrm{Ric}}_g \equiv 0$ on $\Omega$. By classification, $(\Omega,g,V)$ is equivalent to the standard hemisphere, and in particular, $(\Omega, g)$ is isometric to the upper hemisphere $\mathbb{S}^3_+$ with the standard metric. It follows that $\Sigma = \partial \Omega$ is a totally geodesic sphere and hence unstable. Applying Lemma~\ref{thm:impos}, we conclude that $H_g \leqslant 0$. 

If $H_g=0$, by~\cite[Corollary~4.4]{medvedev2024static}, we have
$$
\int_{M \setminus \Omega} V |\mathring{\mathrm{Ric}}_g|^2 \, dv_g = 0.
$$
But $V$ preserves its sign on $M\setminus\Omega$. Thus, $\mathring{\mathrm{Ric}}_g \equiv 0$ both on $\Omega$ and $M \setminus \Omega$, so $\mathring{\mathrm{Ric}}_g \equiv 0$ on $M$, i.e. $(M,g)$ is an Einstein manifold with boundary. Since its scalar curvature is positive, $(M,g)$ admits a finite covering by a domain on the standard sphere $\mathbb S^3$ which we denote by $\tilde M$. The Frankel argument (see~\cite{frankel1961manifolds,frankel1966fundamental,fraser2014compactness}) implies that $\partial \tilde M$ is connected. Recall that $\partial M$ is also connected by assumption. The contracted Gauss equation implies that $K_{\partial M}=K_{\partial \tilde M}=1$. Thus, by the Gauss-Bonnet theorem, $\partial M$ and $\partial \tilde M$ are the round unit spheres. Hence, the covering is one-sheeted, i.e. $M=\tilde M$. Since $\partial M$ is totally geodesic in $M$, as shown above, $M=\mathbb S^3_+$. We conclude that $M=\mathbb S^3_+$, i.e. it coincides with $\Omega$, which contradicts the assumption.

Consider the case where $H_g<0$. Without loss of generality, assume that $V > 0$ on $M \setminus \Omega$ and $V < 0$ on $\Omega$. Then, by~\cite[Corollary~4.4]{medvedev2024static}, we have
$$
6\int_{M} V\, dv_g>\int_{M}V |\mathring{\Ric_g}|^2\, dv_g=-H_g\left(\frac{H_g^2}{4}+1\right)\int_{\partial M}V\,ds_g.
$$
Observe that for any static manifold $(M^n,g,V)$ with connected boundary, we have
$$
H_g\int_{\partial M}V\,ds_g=(n-1)\int_{\partial M}\dfrac{\partial V}{\partial\nu}\,ds_g=(n-1)\int_M\Delta_gV\,dv_g=R_g\int_MV\,dv_g.
$$
Substituting this to the previous inequality and simplifying, we obtain
$$
H_g\left(\frac{H_g^2}{4}+2\right)\int_{\partial M}V\,ds_g>0.
$$
This yields a contradiction, since $H_g < 0$ and $\displaystyle \int_{\partial M} V  ds_g > 0$.

Therefore, \textit{Case~1} is impossible, so $\mathring{\mathrm{Ric}}_g \not\equiv 0$.

\medskip

\textit{Case 2.} $|\mathring{\mathrm{Ric}}_g|^2 = 6$ on $\Omega$, and $(\Omega, g)$ is covered by the standard cylinder. It follows that $\Omega$ is the orientable quotient of this cylinder and $|\Sigma| = \dfrac{4\pi}{3}$; see the remark at the end of Section~7 in~\cite{ambrozio2017static}. Then, by Lemma~\ref{thm:impos}, $H_g \leqslant 0$: otherwise, $\Sigma$ would be a locally area-minimizing sphere of area $\dfrac{4\pi}{3}$, contradicting the non-existence of such surfaces if $H_g>0$.

Suppose that $|\mathring{\mathrm{Ric}}_g|^2 < 6$ at some point in $M \setminus \Omega$. Without loss of generality, assume that $V > 0$ on $M \setminus \Omega$ and $V < 0$ on $\Omega$. Arguing as in the previous case, we conclude that
$$
H_g\left(\frac{H_g^2}{4}+2\right)\int_{\partial M}V\,ds_g>0,
$$
which is impossible, since $H_g \leqslant 0$ and $\displaystyle \int_{\partial M} V  ds_g > 0$. Therefore, $|\mathring{\mathrm{Ric}}_g|^2 = 6$ on $M \setminus \Omega$, and hence on all of $M$.

Furthermore, by~\cite[Proposition 12]{ambrozio2017static},
$$
0=\left(|\nabla\mathring{\Ric_g}|^2+\frac{|C|^2}{2}\right)V+(R_g|\mathring{\Ric_g}|^2+18\det(\mathring{\Ric_g}))V,
$$
where $C$ denotes the Cotton tensor. By formula (27) in~\cite{ambrozio2017static},
$$
R_g|\mathring{\Ric_g}|^2+18\det(\mathring{\Ric_g}) \geqslant (R_g-\sqrt{6}|\mathring{\Ric_g}|)|\mathring{\Ric_g}|^2V=0.
$$
Substituting this inequality to the previous identity, we obtain that $\mathring{\Ric_g}$ is parallel and $C=0$, i.e., $(M\setminus\Omega,g)$ is conformally flat. Moreover, by \cite[Theorem A (ii)]{ambrozio2017static}, $(\Omega,g,V)$ is covered by a static triple $(\tilde \Omega,\tilde g,\tilde V)$ equivalent to the standard cylinder. In particular, $(\Omega,g)$ is also conformally flat. Hence, $(M,g)$ is conformally flat. 

Consider the covering of $(\Omega, g, V)$ by the static triple $(\widetilde{\Omega}, \widetilde{g}, \widetilde{V})$. This covering is two-sheeted. More precisely, $\Omega=\widetilde\Omega/A$, where $A$ is the map defined in Example~\ref{example}. Now, take two copies of $M \setminus \Omega$ and attach them to $\widetilde{\Omega}$ along the connected components of $\partial \widetilde{\Omega}$. This construction yields a two-sheeted covering space $\widetilde{M}$ of $M$, such that its restriction to $\widetilde{\Omega}$ coincides with the previously defined covering of $\Omega$. Let $\pi \colon \widetilde{M} \to M$ be this covering map. Consider the pullbacks $\tilde{g} = \pi^* g$ and $\tilde{V} = \pi^* V$. Then $(\widetilde{M}, \tilde{g}, \tilde{V})$ is a static manifold with boundary. By construction, the covering is Riemannian -- that is, a local isometry -- so $(\widetilde{M}, \tilde{g})$ inherits the geometric properties of $(M, g)$. As shown above, $(M, g)$ is locally conformally flat, hence so is $(\widetilde{M}, \tilde{g})$. Moreover, on $\widetilde{\Omega}$, the triple $(\widetilde{\Omega}, \tilde{g}, \tilde{V})$ is equivalent to the standard cylinder. By~\cite[Theorem~3.1]{kobayashi1982differential}, the interior of any compact, locally conformally flat static manifold covered by such a model must be isometric to a domain in the standard three-dimensional round cylinder $\mathbb{R} \times \mathbb{S}^2$, otherwise the metric cannot extend smoothly across the boundary. In our case, since $\widetilde{V}$ is (up to scaling) the standard Nariai potential and $\widetilde{V}^{-1}(0)$ is connected, the only possibility is that $(\widetilde{M}, \tilde{g}, \tilde{V})$ is equivalent to $Nar_{-1,1}(\mathbb{S}^2)$. In particular, this implies that the mean curvature $H_g$ of $\partial M$ vanishes. Finally, since the original manifold $(M, g, V)$ is the quotient of $(\widetilde{M}, \tilde{g}, \tilde{V})$ by a fixed-point-free isometric involution $A$, we conclude that $(M, g, V)$ is equivalent to $Nar_{-1,1}(\mathbb{S}^2)/A$ (see Remark~\ref{nar}).

\end{proof}

We finish this section with the following observation. 

\begin{theorem}
Let $(M^3,g,V)$ be a compact orientable locally conformally flat static manifold with connected boundary with positive scalar curvature and $V^{-1}(0)\subset  \mathrm{Int}(M)$. Then it is equivalent to $Nar_{-k,k}(\mathbb S^2)/A$ for some $k\in \mathbb N$.
\end{theorem}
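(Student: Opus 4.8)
The plan is to reduce the statement to the structure theory of locally conformally flat static three‑manifolds and then dispose of each model geometry by hand. First I would normalize $R_g = 6$ by scaling the metric (harmless, since the conclusion is only asserted up to equivalence). Since $V^{-1}(0) \subset \mathrm{Int}(M)$ and $\partial M$ is connected, $V$ is nowhere zero and of constant sign on $\partial M$, say $V > 0$ there; the second equation of~\eqref{static} then reads $B_g = (\partial_\nu V/V)\,g$, so $\partial M$ is totally umbilic. The key input is the classification used in the last paragraph of the proof of Theorem~\ref{them:Ric0}: a three‑dimensional locally conformally flat manifold carrying a static potential with $R_g > 0$ is, near each point, isometric either to the round sphere $\mathbb S^3$ of curvature $1$ or to the round cylinder $\mathbb R \times \mathbb S^2(1/\sqrt 3)$ (see~\cite{kobayashi1982differential} and the Cotton‑tensor identity from \cite{ambrozio2017static} quoted there; static metrics are real‑analytic). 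Since $M$ is connected and $\mathring{\mathrm{Ric}}_g$ is real‑analytic, exactly one alternative holds globally: either $\mathring{\mathrm{Ric}}_g \equiv 0$ on $M$ (the \emph{spherical case}), or $|\mathring{\mathrm{Ric}}_g|^2 \equiv 6$ on $M$ and $\mathrm{Int}(M)$ is an open domain in a quotient of the cylinder (the \emph{cylindrical case}).

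Next I would rule out the spherical case. If $\mathring{\mathrm{Ric}}_g \equiv 0$ then $(M,g)$ is Einstein with positive scalar curvature, hence of constant curvature $1$, and the first equation of~\eqref{static} becomes the Obata equation $\Hess_g V + Vg = 0$; its nonconstant solutions are restrictions of linear functionals on $\mathbb S^3 \subset \mathbb R^4$ and admit no invariance under a nontrivial fixed‑point‑free isometry, so in fact $(M,g)$ is a domain in $\mathbb S^3$ with $V = x_4$ after a rotation. Then $V^{-1}(0)$ lies on the great two‑sphere $\{x_4 = 0\}$, while $\partial M$ is a connected totally umbilic surface in $\mathbb S^3$, i.e. a great sphere or a geodesic distance sphere; a short computation of $\partial_\nu V$ on such surfaces shows that the boundary condition forces the relevant normal axis, resp. center, to lie in $\{x_4 = 0\}$, and the topology of a domain bounded by a single surface in $\mathbb S^3$ then forces either $V^{-1}(0) \cap \partial M \neq \varnothing$ or $V^{-1}(0) = \varnothing$ with $\partial M$ entering $\{x_4 < 0\}$ — both contradictions. (Alternatively one recycles Case~1 in the proof of Theorem~\ref{them:Ric0}: the component of $M \setminus V^{-1}(0)$ disjoint from $\partial M$ would be the standard hemisphere, and the integral identities there together with $H_g \int_{\partial M} V\,ds_g = R_g \int_M V\,dv_g$ give a contradiction for every sign of $H_g$.)

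It then remains to treat the cylindrical case. Passing to the universal cover and using the developing map of the conformally flat structure, I would realize $\mathrm{Int}(M)$ as $D/\Gamma$ for a domain $D \subset \mathbb R \times \mathbb S^2(1/\sqrt 3)$ with $\Gamma = \pi_1(M)$ acting by isometries and the lifted potential $\tilde V$ a static potential on the cylinder. On the cylinder with $R = 6$ one computes $\Hess \tilde V = -3\tilde V\,dt^2$, which forces $\tilde V$ to depend only on $t$ and hence $\tilde V = c\sin(\sqrt 3(t - t_0))$. The boundary condition makes $\partial D$ a union of totally umbilic surfaces on which $\tilde V \neq 0$; classifying the totally umbilic surfaces of the round cylinder (the only compact ones avoiding a zero of a $t$‑dependent potential are the totally geodesic slices $\{t = \mathrm{const}\} \times \mathbb S^2$) gives $D = [\alpha,\beta] \times \mathbb S^2$, and since these slices are totally geodesic the boundary condition degenerates to $\tilde V'(\alpha) = \tilde V'(\beta) = 0$, so $\alpha, \beta$ are critical points of $\tilde V$. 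In particular $D$ has finite diameter, so $\Gamma$ is finite, and connectedness of $\partial M$ forces $\Gamma$ to interchange the two end slices. A short inspection of the finite subgroups of $\mathrm{Isom}(\mathbb R \times \mathbb S^2)$ preserving the slab, fixing $\tilde V$, and acting freely (orientability discards the product involution $(t,x) \mapsto (t,-x)$) leaves only $\Gamma = \langle A \rangle \cong \mathbb Z_2$ with $A(t,x) = (\alpha+\beta-t,-x)$; the constraints $\tilde V'(\alpha) = \tilde V'(\beta) = 0$ together with $A$‑invariance of $\tilde V$ force $\beta - \alpha = 2k\pi/\sqrt 3$ for some $k \in \mathbb N$, and after normalizing $t_0$ and the scale $(M,g,V)$ is equivalent to $Nar_{-k,k}(\mathbb S^2)/A$.

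The main obstacle is the structural step of the first paragraph: invoking the classification of locally conformally flat static manifolds in exactly the form needed, and in particular handling the boundary (the cleanest statements are for complete manifolds, so one must argue on a conformally flat extension or work directly with the developing map of the conformally flat structure, as is done implicitly in the proof of Theorem~\ref{them:Ric0}). Once the spherical/cylindrical dichotomy is available, excluding the spherical case needs a moderately careful case analysis combining the explicit Obata potential, the classification of umbilic surfaces in $\mathbb S^3$, and the topology of $\partial M$, while the cylindrical case reduces to bookkeeping with the isometry group of the cylinder.
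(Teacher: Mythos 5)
Your structural step in the first paragraph is false, and this is a genuine gap rather than a detail. It is not true that a locally conformally flat static three-manifold with $R_g=6$ is locally isometric to either the round sphere or the round cylinder: the Schwarzschild--de Sitter static metrics $g_m=V_m^{-2}dr^2+r^2g_{\mathbb S^2}$ (described in Remark~\ref{rem:ifnot}(iii)) are rotationally symmetric, hence locally conformally flat, have $R_g=6$, and satisfy neither $\mathring{\Ric}_g\equiv 0$ nor $|\mathring{\Ric}_g|^2\equiv 6$. The Cotton-tensor identity from~\cite{ambrozio2017static} that you invoke only yields the sphere/cylinder dichotomy under the pinching hypothesis $|\mathring{\Ric}_g|^2\leqslant 6$, which is assumed in Theorem~\ref{them:Ric0} but \emph{not} in the present statement. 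Consequently your real-analyticity argument for a global two-way dichotomy collapses, and your proof never confronts the configurations built out of Schwarzschild--de Sitter blocks, which are exactly the hardest ones to exclude.

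The paper proceeds differently: it cuts $M$ along the components of $V^{-1}(0)$, observes that each interior piece is a locally conformally flat static triple, and applies the Kobayashi--Lafontaine classification to get \emph{three} families of building blocks (hemisphere, Schwarzschild--de Sitter, cylinder and its quotient), plus the photon-sphere-bounded half-blocks for the piece containing $\partial M$. It then runs a combinatorial gluing analysis (each admissible assembly is a path or a cycle in a graph whose vertices are the blocks), showing that the hemisphere and Schwarzschild--de Sitter assemblies always produce either a closed manifold, a disconnected boundary, or a potential vanishing on $\partial M$, leaving only the $Nar_{-k,k}(\mathbb S^2)/A$ configurations. Your treatment of the cylindrical case (slab in the round cylinder, totally geodesic end slices, $\Gamma=\langle A\rangle$) is broadly consistent with what the paper does for the cylinder blocks, and your exclusion of the Einstein case parallels the paper's use of~\cite[Proposition 4.1]{ho2020deformation}; but without handling the Schwarzschild--de Sitter alternative the argument is incomplete. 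To repair it you would need to restore the full three-family local classification and then supply an exclusion argument for the Schwarzschild--de Sitter assemblies, e.g.\ the boundary-counting argument the paper uses.
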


\begin{proof}
Without loss of generality, assume that $R_g=6$. 

Any domain in $M$ bounded by connected components of $V^{-1}(0)$ and not containing $\partial M$ inherits the structure of a static triple $(\Omega, g, V)$. Since such domains are locally conformally flat, it follows from~\cite{kobayashi1982differential,lafontaine1983geometrie} that they must be equivalent to one of the following triples:

1) the standard upper hemisphere
 \begin{equation*}
   \left(\mathbb S^3_{+} , g_{\mathbb S^3} , V = x_4\right);
  \end{equation*}

2) the Schwarzschild--de Sitter manifold
\begin{equation*}
   \left( [r_h(m),r_{c}(m)]\times\mathbb S^2 , g_m = \dfrac{dr^2}{1-r^2-\dfrac{2m}{r}} + r^2 g_{\mathbb S^2}, V_m = \sqrt{1-r^2-\frac{2m}{r}}    \right), 
  \end{equation*}
   where $m\in \left(0,\dfrac{1}{3\sqrt{3}}\right)$ and $r_h(m) < r_c(m)$ are the positive zeroes of $V_m$;

3) the standard cylinder 
 \begin{equation*}
   \left( \left[0,\frac{\pi}{\sqrt{3}}\right] \times \mathbb S^2 , g=dr^2 + \frac{1}{3} g_{\mathbb S^2}, V = \frac{1}{\sqrt{3}} \sin\left(\sqrt{3} r\right) \right)
  \end{equation*} 
  or its quotient by the involution $A\colon (r,x) \mapsto \left(\dfrac{\pi}{\sqrt{3}}-r,-x\right)$. We denote these examples as $Cyl$ and $Cyl/{A}$, respectively.
  
We now analyze how these domains can be glued together along common boundary components. By~\cite[Theorem 3.1]{kobayashi1982differential}, a domain of type~(i) can only be attached to another domain of type~(i), as otherwise the resulting metric would not be smooth.
 
Gluing two domains of the first type yields the closed manifold $\mathbb{S}^3$, which does not have a boundary and thus cannot represent a static manifold with boundary. Therefore, the only admissible configuration is attaching a single domain $\Omega$ containing $\partial M$ to the hemisphere $\mathbb{S}^3_+$. By~\cite[Theorem 3.1]{kobayashi1982differential} by smoothness and local conformal flatness of $(M, g)$, $\Omega$ is isometric to a domain in $\mathbb{S}^3$. Given that $\partial M$ is umbilic, it must be a geodesic sphere. Hence, $\Omega$ is a spherical cap, and $M$ is topologically a 3-sphere minus an open ball. Yet, according to~\cite[Proposition~4.1]{ho2020deformation}, on such a manifold the zero set $V^{-1}(0)$ intersects $\partial M$, contradicting the assumption that $V^{-1}(0) \subset \mathrm{Int}(M)$. 

Consider attaching domains of the second type to each other and to the regions
\begin{equation}\label{domains}
[r_h(m), 3m] \times \mathbb{S}^2 
\quad \text{and} \quad 
[3m, r_c(m)] \times \mathbb{S}^2
\end{equation}
along their corresponding boundary components. We associate a vertex of degree~2 to each domain of the second type, and a vertex of degree~1 to each region in~\eqref{domains}. Each attachment between two domains corresponds to an edge connecting the associated vertices. The resulting graph is connected and consists only of vertices of degree~1 or~2. It is a simple graph-theoretic exercise to show that such a graph must be either a cycle or a path. In the case of a cycle, the resulting manifold is closed, which contradicts our assumption that $(M,g)$ has non-empty boundary. In the case of a path, there are exactly two vertices of degree~1 -- corresponding to the two copies of the regions~\eqref{domains} at the ends -- and thus the resulting manifold has two boundary components. This contradicts the assumption that $\partial M$ is connected. Therefore, no such decomposition can exist under the given conditions.
 
Finally, consider attaching domains of the third type to each other and to the region
\begin{equation}\label{domain}
\left[0,\frac{\pi}{2\sqrt{3}}\right] \times \mathbb{S}^2.
\end{equation}
As in the previous case, we assign a vertex of degree~2 to each domain of type $Cyl$, and a vertex of degree~1 to each copy of $Cyl/{A}$ or of the region~\eqref{domain}. Each attachment corresponds to an edge between the associated vertices. The resulting connected graph has vertices of degree at most~2, so it must be either a cycle or a path. We are interested in the path case, as cycles yield closed manifolds, which are excluded by the assumption that $\partial M \neq \varnothing$. A path has exactly two vertices of degree~1 -- the endpoints -- corresponding to the boundary components of the resulting manifold. If both endpoints correspond to $Cyl/{A}$, the resulting manifold is closed, contradicting $\partial M \neq \varnothing$. If both endpoints are copies of~\eqref{domain}, the resulting manifold has two boundary components, contradicting the connectedness of $\partial M$. Hence, one endpoint must correspond to $Cyl/{A}$ and the other to the region~\eqref{domain}. In this case, the resulting static manifold with boundary is isometric to $Nar_{-k,k}(\mathbb{S}^2)/A$ for some $k \in \mathbb{N}$.
\end{proof}

\section{Proof of Theorem~\ref{CN}} This theorem follows from an analogue of \cite[Theorem 1.12]{medvedev2024static}.

\begin{theorem}\label{thm:aux}
Let $(M^3,g,V)$ be an orientable compact static manifold with boundary with $R_g=6\epsilon,~\epsilon\in\{-1,0,1\}$. Suppose that $\Sigma=V^{-1}(0)$ is connected and $\Sigma\cap \partial M\neq \varnothing$. Let $\Omega$ be a connected component of $M\setminus\Sigma$ with $V>0$ and $S=\partial\Omega\setminus\Sigma$. Then
\begin{equation}\label{general}
\begin{split}
\kappa\left(2\pi\chi(\Sigma)-\left(\epsilon+\frac{H^2_g}{2}\right)|\Sigma|\right)\quad\quad\quad\quad\quad&\\+\left(\frac{3H_g}{2}\int_\Omega V\,dv_g-\int_SV\,ds_g\right)&H_g\epsilon = \int_{\Omega}V|\mathring{\Ric_g}|^2\,dv_g.
\end{split}
\end{equation}
\end{theorem}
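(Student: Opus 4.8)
The plan is to prove~\eqref{general} by integrating a Reilly-type identity over $\Omega$ and carefully evaluating the two boundary contributions coming from $\Sigma$ and $S$. First I would record the pointwise consequences of~\eqref{static} in dimension three: tracing the two equations gives $\Delta_g V=-3\epsilon V$ in $M$ and $\partial V/\partial\nu=\tfrac{H_g}{2}V$ on $\partial M$; subtracting the trace part of the bulk equation gives $\Hess_g V-\tfrac13(\Delta_g V)g=V\mathring{\Ric}_g$; and, since $\Sigma\cap\partial M$ has empty interior in $\partial M$, the boundary equation forces $B_g=\tfrac{H_g}{2}g$ on all of $\partial M$ by continuity. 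As in~\cite{ambrozio2017static,medvedev2024static}, $|\nabla V|$ is a positive constant $\kappa$ along the connected hypersurface $\Sigma$, so $\Sigma$ is a smooth totally geodesic surface; and since $\partial V/\partial\nu=0$ on $\Sigma\cap\partial M$ while $\nabla V\perp\Sigma$, the surface $\Sigma$ meets $\partial M$ orthogonally, so that $\Sigma$ and $S$ form a right angle along the curve $\partial\Sigma=\partial S=\Sigma\cap\partial M$. Establishing this free-boundary regularity of $\Sigma$ in the case $\Sigma\cap\partial M\neq\varnothing$ is the one genuinely new geometric input, everything else being an adaptation of the argument behind~\cite[Theorem 1.12]{medvedev2024static}.

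Using that $\mathring{\Ric}_g$ is trace-free, the identity $\Hess_g V-\tfrac13(\Delta_g V)g=V\mathring{\Ric}_g$, and the contracted second Bianchi identity $\mathrm{div}_g\Ric_g=\tfrac12\nabla R_g=0$, integration by parts over $\Omega$ yields
\[
\int_\Omega V|\mathring{\Ric}_g|^2\,dv_g=\int_{\partial\Omega}\Ric_g(\nabla V,\eta)\,ds_g-\frac{R_g}{3}\int_\Omega\Delta_g V\,dv_g,
\]
where $\eta$ is the outward unit normal of $\Omega$. By the divergence theorem the last term equals $-2\epsilon\big(-\kappa|\Sigma|+\tfrac{H_g}{2}\int_S V\,ds_g\big)$, and the same computation together with $\Delta_g V=-3\epsilon V$ gives the relation $3\epsilon\int_\Omega V\,dv_g=\kappa|\Sigma|-\tfrac{H_g}{2}\int_S V\,ds_g$, used at the end. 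Along $\Sigma$ we have $\nabla V=-\kappa\eta$; the Gauss equation for the totally geodesic $\Sigma$ gives $\Ric_g(\eta,\eta)=3\epsilon-K_\Sigma$, and the Gauss--Bonnet theorem for the surface-with-boundary $\Sigma$ gives $\int_\Sigma K_\Sigma=2\pi\chi(\Sigma)-\int_{\partial\Sigma}\kappa_g$, where $\kappa_g=\tfrac{H_g}{2}$ since $\Sigma$ is totally geodesic and $B_g=\tfrac{H_g}{2}g$. Along $S\subset\partial M$ I would decompose $\nabla V$ into its $\nu$-component and its part tangent to $\partial M$: the contracted Codazzi equation gives $\Ric_g(X,\nu)=X(H_g)-(\mathrm{div}_{\partial M}B_g)(X)=0$ for $X$ tangent to $\partial M$ (again from $B_g=\tfrac{H_g}{2}g$ with $H_g$ constant), so $\Ric_g(\nabla V,\nu)=\tfrac{H_g}{2}V\,\Ric_g(\nu,\nu)$; the Gauss equation for $\partial M$ gives $\Ric_g(\nu,\nu)=3\epsilon+\tfrac{H_g^2}{4}-K_{\partial M}$; and decomposing $\Delta_g$ tangentially to $\partial M$, together with $\Hess_g V(\nu,\nu)=V\,\Ric_g(\nu,\nu)-3\epsilon V$, gives the identity $K_{\partial M}V=\Delta_{\partial M}V+\big(3\epsilon+\tfrac{3H_g^2}{4}\big)V$ on $\partial M$. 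Integrating this over $S$, and using that $V=0$ on $\partial S$ while $\nabla^{\partial M}V$ has length $\kappa$ there and points into $S$, produces a boundary term $-\kappa\,\Length(\partial\Sigma)$ which cancels precisely the geodesic-curvature boundary term coming from $\Sigma$.

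Summing the three contributions, the $\Length(\partial\Sigma)$ terms cancel, leaving
\[
\int_\Omega V|\mathring{\Ric}_g|^2\,dv_g=2\pi\kappa\chi(\Sigma)-\epsilon\kappa|\Sigma|-H_g\Big(\tfrac{H_g^2}{4}+\epsilon\Big)\int_S V\,ds_g .
\]
Inserting the relation $\kappa|\Sigma|=3\epsilon\int_\Omega V\,dv_g+\tfrac{H_g}{2}\int_S V\,ds_g$ to rewrite $-\tfrac{H_g^3}{4}\int_S V\,ds_g$ as $-\tfrac{H_g^2}{2}\kappa|\Sigma|+\tfrac{3H_g^2\epsilon}{2}\int_\Omega V\,dv_g$ then puts the identity in the stated form~\eqref{general}. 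I expect the main difficulty to be the bookkeeping around the corner $\partial\Sigma=\partial S$: justifying the divergence theorem on $\Omega$, which has a codimension-two corner (harmless because $\Sigma$ and $S$ meet at a right angle), and pinning down all orientation conventions so that the Gauss--Bonnet geodesic-curvature term and the conormal-derivative term on $\partial S$ cancel with the correct signs; the other new ingredient, as noted above, is the free-boundary structure of $\Sigma$ along $\partial M$.
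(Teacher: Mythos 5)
Your proposal is correct and follows essentially the same route as the paper: the integration by parts of $\langle\mathring{\Ric}_g,\Hess_g V\rangle$ over $\Omega$ is exactly Schoen's Pohozaev-type identity with $X=\nabla^g V$ that the paper invokes, the boundary terms on $\Sigma$ and $S$ are evaluated by the same Gauss/Codazzi and tangential-Laplacian computations (your detour through $K_{\partial M}$ on $S$ is immediately undone and reduces to the paper's $V\Ric_g(\nu,\nu)=-\Delta_S V-H_g\nu(V)$), and the corner term $-\kappa\,\Length(\Gamma)$ combines with Gauss--Bonnet for the free-boundary totally geodesic disk exactly as in the paper, with the final substitution $\kappa|\Sigma|=3\epsilon\int_\Omega V\,dv_g+\tfrac{H_g}{2}\int_S V\,ds_g$ matching the paper's \eqref{penul2}. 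The free-boundary structure of $\Sigma$ and the umbilicity $B_g=\tfrac{H_g}{2}g$ that you derive by hand are the facts the paper imports from \cite[Proposition 1]{cruz2023static}.
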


\begin{proof}
The formula follows from Schoen's Pohozaev-type integral identity
\begin{equation}\label{schoen}
\dfrac{n-2}{2n}\int_\Omega X(R_g)\,dv_g = -\dfrac{1}{2}\int_\Omega\langle\mathcal{L}_Xg,\mathring\Ric_g\rangle\,dv_g +\int_{\partial \Omega} \mathring\Ric_g(X,\nu)\,ds_g,   
\end{equation}
with $X=\nabla^gV$. One has:
$$
X(R_g)=0, \quad \langle\mathcal{L}_Xg,\mathring\Ric_g\rangle=2V\left(|\Ric_g|^2-\dfrac{R_g^2}{3} \right)=2V|\mathring\Ric_g|^2,
$$
$$
\mathring\Ric_g(X,\nu)=\frac{H_gV}{2}\left(\Ric_g(\nu,\nu)-\frac{R_g}{3}\right)~\quad~\text{on } S,
$$
and
$$
\mathring\Ric_g(X,\nu)=\kappa\left(\frac{R_g}{3}-\Ric_g(\xi,\xi)\right)~\quad~\text{on } \Sigma.
$$
Here, we used that $\mathcal{L}_Xg=2\Hess_gV$, $|\nabla^gV|=\kappa$ and the exterior unit normal $\xi=-\dfrac{\nabla^gV}{|\nabla^g V|}$ on $\Sigma$. Also, $\Ric_g(Y,\nu)=0$ on $S$ for any tangent vector field $Y$ by \cite[Proposition 1 (e)]{cruz2023static}. In particular, $\Ric_g(\nabla^gV,\nu)=\Ric_g(\nu(V)\nu,\nu)$ on $S$. By the contracted Gauss equation,
$$
\Ric_g(\xi,\xi)=3\epsilon-K_\Sigma,
$$
where $K_\Sigma$ is the Gauss curvature of $\Sigma$. Further, it is not difficult to see that
$$
\Delta_gV=\Delta_SV+H_g\nu(V)+\Hess_gV(\nu,\nu)~\quad \text{on } S,
$$
whence
$$
V\Ric_g(\nu,\nu)=-\Delta_SV-H_g\nu(V).
$$
Thus,
$$
\mathring\Ric_g(X,\nu)=\frac{H_g}{2}\left(-\Delta_SV-H_g\frac{\partial V}{\partial \nu}-2\epsilon V\right)~\quad~\text{on } S,
$$
and
$$
\mathring\Ric_g(X,\nu)=\kappa\left(K_\Sigma-\epsilon\right)~\quad~\text{on } \Sigma.
$$

Substituting all this into~\eqref{schoen} and simplifying, we obtain
\begin{equation}\label{penul1}
\begin{split}
0\leqslant \int_\Omega V|\mathring\Ric_g|^2\,dv_g&=\kappa\int_\Sigma(K_\Sigma-\epsilon)\,dv_g\\&-\frac{H_g}{2}\int_S\Delta_SV\,ds_g-\frac{H^2_g}{2}\int_S\frac{\partial V}{\partial \nu}\,ds_g-H_g\epsilon\int_SV\,ds_g.
\end{split}
\end{equation}
Let $\Gamma=\Sigma\cap S$. Observe that the geodesic curvature $k_\Gamma$ of $\Gamma$ in $\Sigma$ is equal to $\dfrac{H_g}{2}$. Indeed, parametrize $\Gamma$ naturally by $t$. Then by definition
$$
k_\Gamma=\langle\nabla^\Sigma_{\dot\Gamma}\dot\Gamma,\nu\rangle_g=\langle\nabla^M_{\dot\Gamma}\dot\Gamma,\nu\rangle_g,
$$
since $B_\Sigma\equiv0$. Here $\nabla^\Sigma$ and $\nabla^M$ denote the Levi-Civita connections of $\Sigma$ and $M$, respectively. From the other side, 
$$
\langle\nabla^M_{\dot\Gamma}\dot\Gamma,\nu\rangle_g=B_{\partial M}(\dot\Gamma,\dot\Gamma)=\frac{H_g}{2},
$$
since $\nabla^{\partial M}_{\dot\Gamma}\dot\Gamma=0$ by  \cite[Proposition 1 (a.2)]{cruz2023static}. Whence, $k_\Gamma=\dfrac{H_g}{2}$.

Further, by the divergence theorem,
\begin{equation}\label{kgamma}
\frac{H_g}{2}\int_S\Delta_SV\,ds_g=\frac{H_g}{2}\int_\Gamma\frac{\partial V}{\partial \xi}\,ds_g=-\kappa\int_\Gamma\frac{H_g}{2}\,ds_g=-\kappa\int_\Gamma k_\Gamma\,ds_g,
\end{equation}
since $\dfrac{\partial V}{\partial \xi}=-\kappa$ everywhere on $\Sigma$.

By the divergence theorem again,
\begin{equation}\label{penul2}
\int_S\frac{\partial V}{\partial \nu}\,ds_g=\int_\Omega\Delta_gV\,dv_g-\int_\Sigma\frac{\partial V}{\partial \xi}\,ds_g=-3\epsilon\int_\Omega V\,dv_g+\kappa |\Sigma|
\end{equation}

Substituting~\eqref{kgamma} and~\eqref{penul2} into~\eqref{penul1} and using the Gauss-Bonnet theorem for surfaces with boundary, we obtain~\eqref{general}.
\end{proof}

\begin{remark}
Formula~\eqref{general} can be rewritten in a form analogous to \cite[formula (1.6)]{medvedev2024static}:
$$
\int_{\Omega}V|\mathring{\Ric_g}|^2\,dv_g+H_g\left(\frac{H_g^2}{4}+\epsilon\right)\int_{S}V\,ds_g= \kappa(2\pi\chi(\Sigma)-\epsilon|\Sigma|).
$$
In fact, if $\Omega$ is a domain whose boundary consists of $d$ connected components $\Sigma_1, \ldots, \Sigma_d \subset V^{-1}(0)$, each intersecting $\partial M$, and $r$ components $S_1, \ldots, S_r \subset \partial M$ with mean curvatures $H_1, \ldots, H_r$, respectively, then a similar formula can be derived:
\begin{align*}
\int_{\Omega}V|\mathring{\Ric_g}|^2\,dv_g+\sum_{j=1}^rH_j\left(\frac{H_j^2}{4}+\epsilon\right)&\int_{S_j}V\,ds_g=\\\nonumber&= \sum_{i=1}^d\kappa_i\left(2\pi\chi(\Sigma_i)-\epsilon|\Sigma_i|\right).
\end{align*}
\end{remark}

\begin{proof}[Proof of Theorem~\ref{CN}]
$(i)$  Taking $H_g=0$ and $\epsilon=1$ in~\eqref{general} in Theorem~\ref{thm:aux}, we obtain
$$
2\pi\chi(\Sigma)-|\Sigma|\geqslant 0.
$$
If equality is achieved, we immediately conclude that $\Sigma$ is a topological disk and $|\Sigma|=2\pi$. Moreover, $\mathring\Ric_g\equiv0$, i.e. $(M,g)$ is an Einstein manifold with boundary. Then, arguing exactly as at the end of \textit{Case 1} in the proof of Theorem~\ref{them:Ric0}, we conclude that $M=\mathbb S^3_+$. By \cite[Proposition 4.1]{ho2020deformation}, $V\in span\{x_1,x_2,x_3\}$, where $x_1,\ldots,x_{4}$ are the coordinates of $\mathbb S^3_+$ in $\mathbb R^4$.

\medskip

$(ii)$ Let $\Omega$ denote the connected component of $M \setminus \Sigma$ that does not contain $\partial M$. Applying~\cite[formula (9)]{ambrozio2017static} (see also \cite[formula (1.6)]{medvedev2024static}) to the static triple $(\Omega, g, V)$, we deduce that $\Sigma$ is a totally geodesic two-sphere and $|\Sigma| \leqslant 4\pi$. The latter inequality also follows from the Boucher-Gibbons-Horowitz and Shen theorem~\cite{boucher1984uniqueness,shen1997note}.

Now suppose $|\Sigma| = 4\pi$. Then applying~\cite[formula (1.6)]{medvedev2024static} to $(M \setminus \Omega, g, V)$, we conclude that $\partial M$ is totally geodesic and $\mathring{\mathrm{Ric}}\equiv 0$. As in case~$(i)$, this implies that $(M, g)$ is isometric to the upper hemisphere $\mathbb{S}^3_+$ with the standard metric. By~\cite[Proposition~4.1]{ho2020deformation}, $V\in span\{x_1,x_2,x_3\}$, where $x_1,\ldots,x_4$ are the coordinates of $\mathbb S^3_+$ in $\mathbb R^4$. In this case, the zero-level set $V^{-1}(0)$ intersects $\partial M$. This contradicts the assumption that the zero-level set of $V$ does not intersect the boundary.
\end{proof}

\bibliography{mybib}

\begin{thebibliography}{CLdS24}

\bibitem[AdL22]{almaraz2022rigidity}
S.~Almaraz and L.~L. de~Lima.
\newblock Rigidity of non-compact static domains in hyperbolic space via
  positive mass theorems.
\newblock {\em arXiv preprint arXiv:2206.09768}, 2022.

\bibitem[Amb17]{ambrozio2017static}
L.~Ambrozio.
\newblock On static three-manifolds with positive scalar curvature.
\newblock {\em Journal of Differential Geometry}, 107(1):1--45, 2017.

\bibitem[BBB22]{baltazar2022local}
H.~Baltazar, A.~Barros, and R.~Batista.
\newblock A local rigidity theorem for minimal two-spheres in an electrovacuum
  spacetime.
\newblock {\em arXiv preprint arXiv:2203.16656}, 2022.

\bibitem[BBN10]{bray2010rigidity}
H.~Bray, S.~Brendle, and A.~Neves.
\newblock Rigidity of area-minimizing two-spheres in three-manifolds.
\newblock {\em Communications in Analysis and Geometry}, 18(4):821--830, 2010.

\bibitem[BGH84]{boucher1984uniqueness}
W.~Boucher, G.~W. Gibbons, and G.~T. Horowitz.
\newblock Uniqueness theorem for anti--de {S}itter spacetime.
\newblock {\em Physical Review D}, 30(12):2447, 1984.

\bibitem[CLdS24]{cruz2024minmax}
T.~Cruz, V.~Lima, and A.~de~Sousa.
\newblock {Min-max minimal surfaces, horizons and electrostatic systems}.
\newblock {\em Journal of Differential Geometry}, 128(2):583 -- 637, 2024.

\bibitem[CN23]{cruz2023static}
T.~Cruz and I.~Nunes.
\newblock On static manifolds satisfying an overdetermined {R}obin type
  condition on the boundary.
\newblock {\em Proceedings of the American Mathematical Society},
  151(11):4971--4982, 2023.

\bibitem[CSS23]{cruz2023critical}
T.~Cruz and A.~Silva~Santos.
\newblock Critical metrics and curvature of metrics with unit volume or unit
  area of the boundary.
\newblock {\em The Journal of Geometric Analysis}, 33(1):22, 2023.

\bibitem[FL14]{fraser2014compactness}
A.~Fraser and M.M.-C. Li.
\newblock Compactness of the space of embedded minimal surfaces with free
  boundary in three-manifolds with nonnegative {R}icci curvature and convex
  boundary.
\newblock {\em Journal of Differential Geometry}, 96(2):183--200, 2014.

\bibitem[Fra61]{frankel1961manifolds}
T.~Frankel.
\newblock Manifolds with positive curvature.
\newblock {\em Pacific Journal of Mathematics}, 11(1):165--174, 1961.

\bibitem[Fra66]{frankel1966fundamental}
T.~Frankel.
\newblock On the fundamental group of a compact minimal submanifold.
\newblock {\em Annals of Mathematics}, 83(1):68--73, 1966.

\bibitem[HH20]{ho2020deformation}
P.~T. Ho and Y.-C. Huang.
\newblock Deformation of the scalar curvature and the mean curvature.
\newblock {\em arXiv preprint arXiv:2008.11893}, 2020.

\bibitem[Kob82]{kobayashi1982differential}
O.~Kobayashi.
\newblock A differential equation arising from scalar curvature function.
\newblock {\em Journal of the Mathematical Society of Japan}, 34(4):665--675,
  1982.

\bibitem[Laf83]{lafontaine1983geometrie}
J.~Lafontaine.
\newblock Sur la g{\'e}om{\'e}trie d'une g{\'e}n{\'e}ralisation de
  l'{\'e}quation diff{\'e}rentielle d'{O}bata.
\newblock {\em J. Math. Pures Appl}, 62(9):63--72, 1983.

\bibitem[LN15]{lee2015penrose}
D.~A. Lee and A.~Neves.
\newblock The {P}enrose inequality for asymptotically locally hyperbolic spaces
  with nonpositive mass.
\newblock {\em Communications in Mathematical Physics}, 339:327--352, 2015.

\bibitem[Med24]{medvedev2024static}
V.~Medvedev.
\newblock Static manifolds with boundary: {T}heir geometry and some uniqueness
  theorems.
\newblock {\em arXiv preprint arXiv:2410.15347}, 2024.

\bibitem[MSY82]{meeks1982embedded}
W.~Meeks, L.~Simon, and S.-T. Yau.
\newblock Embedded minimal surfaces, exotic spheres, and manifolds with
  positive {R}icci curvature.
\newblock {\em Annals of Mathematics}, 116(3):621--659, 1982.

\bibitem[She97]{shen1997note}
Y.~Shen.
\newblock A note on {F}ischer-{M}arsden's conjecture.
\newblock {\em Proceedings of the American Mathematical Society},
  125(3):901--905, 1997.

\bibitem[She24]{sheng2024localized}
H.~Sheng.
\newblock Localized deformation of the scalar curvature and the mean curvature.
\newblock {\em arXiv preprint arXiv:2402.08619}, 2024.

\bibitem[She25]{sheng2025static}
H.~Sheng.
\newblock Static manifolds with boundary and rigidity of scalar curvature and
  mean curvature.
\newblock {\em International Mathematics Research Notices}, 2025(7):rnaf086,
  2025.

\bibitem[SY79]{schoen1979existence}
R.~Schoen and S.-T. Yau.
\newblock Existence of incompressible minimal surfaces and the topology of
  three dimensional manifolds with non-negative scalar curvature.
\newblock {\em Annals of Mathematics}, 110(1):127--142, 1979.

\end{thebibliography}
\bibliographystyle{alpha}

\end{document}